\documentclass[review]{elsarticle}

\usepackage{lineno,hyperref}
\modulolinenumbers[5]
\usepackage{amsfonts,amssymb}
\usepackage{amsmath}
\usepackage{amsthm}
\usepackage{mathrsfs} 
\usepackage{graphicx}
\usepackage{enumerate}
\usepackage{tikz}
\usepackage{caption}

\journal{Stochastic Processes and their Applications}








\newtheorem{thm}{Theorem}[section]
\newtheorem{prop}[thm]{Proposition}

\newtheorem{lem}[thm]{Lemma}
\theoremstyle{remark}
\newtheorem{rmk}[thm]{Remark}
\theoremstyle{definition}
\newtheorem{defn}{Definition}[section]

\definecolor{DarkGreen}{RGB}{0, 118, 0}
\definecolor{Enrica}{RGB}{200, 100, 20}
\DeclareMathOperator{\Cov}{Cov}
\DeclareMathOperator{\E}{\mathbb{E}}

\DeclareMathOperator{\N}{\mathbb{N}}
\DeclareMathOperator{\R}{\mathbb{R}}

\DeclareMathOperator{\cF}{\mathcal{F}}

\DeclareMathOperator{\cT}{\mathcal{T}}
\DeclareMathOperator{\cN}{\mathcal{N}}

\DeclareMathOperator{\cA}{\mathcal{A}}
\DeclareMathOperator{\cI}{\mathcal{I}}
\DeclareMathOperator{\cZ}{\mathcal{Z}}

\DeclareMathOperator{\cL}{\mathcal{L}}
\DeclareMathOperator{\cJ}{\mathcal{J}}

\DeclareMathOperator{\cS}{\mathcal{S}}
\DeclareMathOperator{\D}{\mathbb{D}}

\DeclareMathOperator{\bP}{\mathbb{P}}

\newcommand{\pd}[2]{\frac{\partial #1}{\partial #2}}
\newcommand{\pdsup}[3]{\frac{\partial^{#3} #1}{\partial #2^{#3}}}
\newcommand{\pdmis}[3]{\frac{\partial^{2} #1}{\partial #2 \ \partial #3}}

\newcommand{\der}[2]{\frac{d #1}{d #2}}

\newcommand{\Norm}[2]{\left\Vert #1 \right\Vert_{#2}}

\DeclareMathOperator*{\argmin}{argmin}


\begin{document}

\begin{frontmatter}

\title{An optimal Gauss-Markov approximation for a process with stochastic drift and applications}

\author[1]{Giacomo Ascione}
\ead{giacomo.ascione@unina.it}

\author[2]{Giuseppe D'Onofrio}
\ead{giuseppe.donofrio@unito.it}

\author[3]{Lubomir Kostal}
\ead{kostal@biomed.cas.cz}

\author[1]{Enrica Pirozzi \corref{cor1}}
\ead{enrica.pirozzi@unina.it}

\address[1]{Dipartimento di Matematica e Applicazioni ``Renato Caccioppoli'', Universit\`a degli Studi di Napoli Federico II, 80126 Napoli, Italy}
\address[2]{Dipartimento di Matematica “G. Peano”, Universit\`a degli Studi di Torino, Via Carlo Alberto 10, 10123 Torino, Italy}
\address[3]{Institute of Physiology of the Czech Academy of Sciences, Videnska 1083, 14220 Prague 4, Czech Republic}
\cortext[cor1]{Corresponding author}

\begin{abstract}
We consider a linear stochastic differential equation with stochastic drift. We study the problem of approximating the solution of such equation through an Ornstein-Uhlenbeck type process, by using direct methods of calculus of variations. We show that general power cost functionals satisfy the conditions for existence and uniqueness of the approximation. We provide some examples of general interest and we give bounds on the goodness of the corresponding approximations. Finally, we focus  on a model of a neuron embedded in a simple network and we study the approximation of its activity, by exploiting the aforementioned results.
\end{abstract}

\begin{keyword}
Stochastic differential equations; Optimality conditions; Shot noise; Neuronal models
\end{keyword}

\end{frontmatter}

\linenumbers
 \section{Introduction}\label{Sec.1}
For more than a century stochastic differential equations (SDEs) have played a key role in the description of fluctuating phenomena belonging to different areas of applied mathematics (\cite{karatzas},\cite{oksendal}). 
Here, we consider the following SDE in which the drift is characterized by a stochastic process $z(t)$ independent of $W(t)$: 
\begin{equation}\label{Equ.1.1}
dX(t)=[a(t)X(t)+z(t)]dt+\sigma dW(t), \qquad X(0)=x_0
\end{equation} 
where $a,z,\sigma,x_0$ are chosen to guarantee existence and uniqueness of the strong solution of the equation.
These equations are of interest in many applications.
In mathematical finance stochastic volatility  is used to model option pricing to represent that volatility varies  with respect to strike price and expiry (\cite{davis},\cite{lions}).
In time series analysis the stochastic trend is used in a difference equation. This is a discrete counterpart of the SDE \eqref{Equ.1.1} (\cite{gil}).
In computational neuroscience they model networks of interacting neurons in the presence of random synaptic weights (\cite{Delarue}, \cite{faugeras2019meanfield}, \cite{ostr}).
The present work also stems from neuronal modeling (see \cite{bress}, \cite{chaos}, \cite{Lansky2016}, \cite{Lansky2006},  \cite{Pirozzi2018}, \cite{TouboulSIAM}, \cite{tuckwellvol2}): one can model the  membrane potential of a neuron through a stochastic process $V(t)$ solving
\begin{equation*}\label{Equ.1.2}
dV(t)=\left(-\frac{V(t)-V_R}{\theta}+z(t)\right)dt+\sigma dW(t), \quad V(0)=v_0
\end{equation*}
where $V_R$ is the resting potential, $\theta$ is the characteristic time constant of the neuron and $z(t)$ is a process representing the collection of the stimuli the neuron under consideration receives from other neurons or from its own activity.\\ 
In the first part of this paper we study some features of the solution $X(t)$ of Equation \eqref{Equ.1.1}.
Since the process $X(t)$ depends on $z(t)$, it can be non Markov and/or non Gaussian.
This work is mainly focused on obtaining an optimal (in a sense that will be specified later) Gauss-Markov (GM) approximation of the process $X(t)$.
This approximation strategy enables one to use the extensive  theoretical results on  GM processes (see for instance \cite{mehr1965certain,MCAP2011, dinar, RdM2015}). Indeed, finding a good approximating GM process with a small approximation error allows one to use the integral equation approach to study the first passage time of the approximating process in place of the actual one.\\
To find the ``\textit{best}'' approximating GM process we look for the minimizer of a general cost functional $\mathcal J$, here usually a $L^2$ functional, among all Ornstein-Uhlenbeck type processes.

To do that, we solve the minimization problem in a very general setting.
Using results of calculus of variations, we show that the minimizer exists and it is unique inside the aforementioned class, requiring relatively mild conditions. Moreover, the Euler-Lagrange equation and the transversality condition of the approximation problem are obtained. The proofs are generalizations in a probabilistic setting of the main tools of the theory of Direct Methods of Calculus of Variations (see \cite{Dacorogna}), referring in particular to the relaxation of a problem. The Euler-Lagrange equation and the transversality condition are instead found by using an approach that is typical to the Classical Methods of Calculus of Variations (see \cite{dacorogna2014introduction}).
In particular we consider, in the familiy of all suitable cost functionals, power costs that are shown to satisfy the needed assumptions. Power costs represent the integral mean power error of the approximation. For instance, the second power cost is the integral of mean square error of approximation, whose minimizer gives a continuous-time version of a least mean square approximation.

Some examples are given in the cases in which $z(t)$ is a step function, a Poisson process, a compound Poisson process, a shot noise, a Brownian motion or an Ornstein-Uhlenbeck process.
Finally, we propose a stochastic neuronal model for the description of the firing activity of a neuron subject to the inputs coming from other neurons. Our example corresponds to the case in which $z(t)$ is a shot noise, that is to say $z(t)=\sum_{i=1}^{M}\beta_iR(t-\cT_i)$ where $\cT_i$ are i.i.d. random variables, distributed as a given variable $\cT$ with $\bP(\cT<0)=0$, independent of $W(t)$, $\beta_i$ are i.i.d. random variables independent of $(\cT_i)_{i \in \N}$ and $W(t)$ and $R$ is the response function, such that $R(t)=0 \quad \forall t<0$.
The stochastic nature of the drift in the model equation is due to the stochastic behavior of the inputs received from the other neurons that occur randomly in time and in space (see for instance \cite{abbott, tuckwellvol2, Coupled, carfora}).
In a theoretical context, we can adopt a specified distribution function for $\cT_i$ and $\beta_i$ whereas in the application context a distribution function may be one of the unknowns of the problem. This case is investigated in Section \ref{Sec.5}.
We stress that, although used here in the neuronal modeling context, the results obtained about the approximation are completely general for equations like \eqref{Equ.1.1}.
The paper is structured as follows:
\begin{itemize}
	\item In Section \ref{Sec.2} we introduce the target equation and the approximation problem;
	\item In Section \ref{Sec.3} we describe the problem in a more general setting and prove some sufficient and necessary conditions for existence and uniqueness of the approximation;
	\item In Section \ref{Sec.4} we provide some examples of general interest;
	\item In Section \ref{Sec.5} we construct a simple neuronal model and we use the previous results to find the approximating Gauss-Markov process for the membrane potential;
	\item Finally, in Section \ref{Sec.6} we summarize the work and we give some concluding remarks.
\end{itemize}  
%
%

\section{The linear equation and the $OU$ class}\label{Sec.2}
\subsection{The linear equation}\label{Sec.2.1}
Let $(\Omega, \mathcal{F},\mathbb{P})$ be a probability space endowed with the (completed) natural filtration $\{\mathcal{F}_t\}_{t\geq 0}$ of the standard Brownian motion $\{W(t),t\geq 0\}$. Let us consider the following stochastic differential equation in a time interval $[0,T]$, for a fixed $T<+\infty$,
\begin{equation}\label{Equ.2.1}
dX(t)=[a(t)X(t)+z(t)]dt+\sigma dW(t), \quad X(t)=x_0
\end{equation}
with $z(t)$ stochastic process adapted to $\{\mathcal{F}_t\}_{t\geq 0}$ such that its sample paths belong to $L^1(0,T)$, $a \in L^1(0,T)$ a damping rate, and $\sigma>0$. Let us denote by \linebreak $\cL^0(\Omega,\{\mathcal{F}_t\}_{t\geq 0};L^1(0,T))$ the space of the stochastic processes adapted to \linebreak $\{\mathcal{F}_t\}_{t\geq 0}$ with sample paths a.s. in $L^1(0,T)$ and by $\cL^p(\Omega,\{\mathcal{F}_t\}_{t\geq 0};L^1(0,T))$, for some $p \ge 1$, the space of the stochastic processes adapted to $\{\mathcal{F}_t\}_{t\geq 0}$ with sample paths a.s. in $L^1(0,T)$ such that $\E[|z(t)|^p]<+\infty$. For simplicity we will assume $z(t)$ to be independent from $W(t)$. In the examples the function $a$ will be a negative constant.\\
Since Eq.\eqref{Equ.2.1} is a linear equation, one can ensure the existence of a unique strong solution (see for instance \cite{karatzas}). In particular one has the following result:
\begin{prop}\label{prop2.1}
	The map $\cS:\cL^0(\Omega,\{\cF_t\}_{t\ge0};L^1(0,T)) \to \cL^0(\Omega;L^0(0,T))$ given by
	\begin{equation*}
	\cS z(t)=e^{A(t)}x_0+\sigma e^{A(t)}\int_0^{t}e^{-A(s)}dW_s+e^{A(t)}\int_0^tz(s)e^{-A(s)}ds,
	\end{equation*}
	where 
	\begin{equation*}
	A(t)=\int_0^ta(s)ds,
	\end{equation*}
	is an injection that associates $z \in \cL^0(\Omega,\{\mathcal{F}_t\}_{t\geq 0};L^1(0,T))$ to the unique strong solution of Equation \eqref{Equ.2.1}.
\end{prop}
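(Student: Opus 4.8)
The plan is to split the argument into two stages: first verify that the displayed formula indeed defines the unique strong solution of \eqref{Equ.2.1}, and then establish injectivity of $\cS$.

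For the first stage I would use the classical integrating-factor technique. Since $a\in L^1(0,T)$, the function $A(t)=\int_0^t a(s)\,ds$ is absolutely continuous with $A(0)=0$, so $e^{\pm A(t)}$ is continuous, bounded and of finite variation on $[0,T]$. Set $Y(t)=e^{-A(t)}X(t)$ and apply the It\^o product rule; the bracket term vanishes because $e^{-A}$ has finite variation, giving $dY(t)=e^{-A(t)}z(t)\,dt+\sigma e^{-A(t)}\,dW_t$, hence, using $A(0)=0$,
\[
Y(t)=x_0+\int_0^t e^{-A(s)}z(s)\,ds+\sigma\int_0^t e^{-A(s)}\,dW_s,
\]
and multiplying by $e^{A(t)}$ returns exactly $\cS z(t)$. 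All three terms are well defined: boundedness of $e^{-A}$ and $z(\cdot)\in L^1(0,T)$ a.s.\ make the Lebesgue integral finite, while boundedness (hence square integrability) and adaptedness of the deterministic integrand $e^{-A}$ make the It\^o integral a well-defined continuous martingale; thus $\cS z$ has continuous, adapted paths (in particular in $L^0(0,T)$) and solves \eqref{Equ.2.1}. For uniqueness, if $X_1,X_2$ are two strong solutions, their difference $D=X_1-X_2$ satisfies the pathwise linear ODE $dD(t)=a(t)D(t)\,dt$, $D(0)=0$, and Gr\"onwall's inequality (or integration against $e^{-A}$) forces $D\equiv 0$; so $\cS z$ is the unique strong solution.

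For the second stage, suppose $\cS z_1=\cS z_2$ as elements of $\cL^0(\Omega;L^0(0,T))$, i.e.\ for a.e.\ $\omega$ the two paths agree for a.e.\ $t$. The Brownian term and the initial-datum term of $\cS z$ do not depend on $z$, so they cancel in the difference, leaving
\[
e^{A(t)}\int_0^t e^{-A(s)}\bigl(z_1(s)-z_2(s)\bigr)\,ds=0
\]
for a.e.\ $t$, almost surely. Since the left-hand side is continuous in $t$ and $e^{A(t)}>0$, this gives $\int_0^t e^{-A(s)}(z_1(s)-z_2(s))\,ds=0$ for all $t\in[0,T]$, a.s. The map sending $g\in L^1(0,T)$ to its primitive $\int_0^\cdot g(s)\,ds$ is injective (Lebesgue differentiation theorem), hence $e^{-A(s)}(z_1(s)-z_2(s))=0$ for a.e.\ $s$; since $e^{-A(s)}>0$ we conclude $z_1=z_2$ in $L^1(0,T)$ almost surely, i.e.\ $z_1=z_2$ as elements of $\cL^0(\Omega,\{\cF_t\}_{t\ge 0};L^1(0,T))$, which is injectivity.

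The existence/uniqueness part is standard linear SDE theory; the only genuinely new point is injectivity, and its crux is the elementary but essential observation that the $z$-dependence of $\cS z$ is carried entirely by the deterministic-kernel integral $\int_0^t e^{-A(s)}z(s)\,ds$, to which injectivity of the primitive operator on $L^1$ applies. I do not expect a serious obstacle here; the only mild care needed is in passing from ``a.e.\ $t$'' to ``all $t$'', which is resolved by path continuity.
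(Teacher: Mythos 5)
Your proof is correct, and it rests on the same key observation as the paper's: the dependence of $\cS z$ on $z$ sits entirely in the term $e^{A(t)}\int_0^t e^{-A(s)}z(s)\,ds$, the other two terms being independent of $z$, so injectivity of $\cS$ reduces to injectivity of this kernel map on $L^1(0,T)$ applied path by path. The difference is one of self-containedness rather than substance. For existence and uniqueness the paper simply cites Karatzas and Shreve (Theorems 2.5 and 2.13 on linear equations), whereas you re-derive the variation-of-constants formula with the integrating factor $e^{-A(t)}$ and the It\^o product rule; your derivation is sound (the covariation term vanishes because $e^{-A}$ is deterministic and of finite variation), and the separate Gr\"onwall step is in fact redundant, since the integrating-factor computation already shows that any strong solution must coincide with $\cS z$. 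For injectivity the paper introduces the map $\cI f(t)=e^{A(t)}\int_0^t f(s)e^{-A(s)}\,ds$ and proves it is a bijection of $L^1(0,T)$ onto $\{F\in AC([0,T]):\,F(0)=0\}$ by identifying $\cI f$ with the unique Carath\'eodory solution of $F'=aF+f$, $F(0)=0$, with explicit inverse $\cI^{-1}F=F'-aF$; you instead invoke the Lebesgue differentiation theorem to conclude that a primitive determines its $L^1$ density. The two arguments are equivalent here, but the paper's formulation buys a little more: it records the explicit inverse $\cI^{-1}$, which is reused later (Equation \eqref{fpcalc}) to recover the optimal drift $f_p$ from the optimal $F_p$, something your version would have to redo. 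Your care in passing from ``a.e.\ $t$'' to ``all $t$'' via path continuity before dividing by $e^{A(t)}$ is exactly the right point to attend to.
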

The well-posedness of $\cS$, i.e. existence and uniqueness of the strong solution, follows from \cite[Theorem $2.5$ and $2.13$]{karatzas}. Later we will prove that $\cS$ is an injection.\\
Given a generic $z(t) \in \cL^0(\Omega,\{\cF_t\}_{t\ge0}; L^1(0,T))$, we can split the process $X(t)=\cS z(t)$ in two parts. Indeed, if we set
\begin{equation}\label{Equ.2.3}
Y(t)=e^{A(t)}x_0+\sigma e^{A(t)}\int_0^{t}e^{-A(s)}dW_s, \quad Z(t)=e^{A(t)}\int_0^tz(s)e^{-A(s)}ds 
\end{equation}
we have
\begin{equation}\label{spliteq}
X(t)=Y(t)+Z(t).
\end{equation}
In particular $Y(t)$ is an Ornstein-Uhlenbeck process independent of $Z(t)$. Its mean and covariance are given by 
\begin{equation}\label{Equ.2.4}
\mathbb{E}[Y(t)]=e^{A(t)}x_0, \quad \mathrm{Cov}(Y(t),Y(s))=\sigma^2e^{A(t)+A(s)}\int_0^{\min\{t,s\}}e^{-2A(u)}du.
\end{equation}
On the other hand, if $z(t)$ is a Riemann-integrable Gaussian process (for instance if it admits continuous sample paths), then $Z(t)$ is also Gaussian and independent of $Y(t)$. By Equation \eqref{spliteq} we conclude that, in such case, $X(t)$ is a Gaussian process. In general $X(t)$ could be neither Markov nor Gaussian.\\
Let us state this easy Lemma.
\begin{lem}
	Let $z \in \cL^1(\Omega,\{\mathcal{F}_t\}_{t\geq 0};L^1(0,T))$ such that $\E[z(t)] \in L^1(0,T)$. Then
	\begin{equation}\label{meanZ}
	\E[Z(t)]=e^{A(t)}\int_0^t \E[z(s)]e^{-A(s)}ds, \ \forall t \in [0,T].
	\end{equation}
	Moreover, if $z \in \cL^2(\Omega,\{\mathcal{F}_t\}_{t\geq 0};L^1(0,T))$ is such that $\D[z(t)]=\E[(z(t)-\E[z(t)])^2] \in L^1(0,T)$, then
	\begin{equation}\label{covZ}
	\Cov(Z(t),Z(s))=e^{A(t)+A(s)}\int_0^t\int_0^s\Cov(z(u),z(v))e^{-A(u)-A(v)}dudv.
	\end{equation}
\end{lem}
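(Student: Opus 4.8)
The plan is to read both identities off the explicit formula $Z(t)=e^{A(t)}\int_0^t z(s)e^{-A(s)}\,ds$ by interchanging the expectation with the Lebesgue integral in $ds$; the whole content of the lemma is in checking that the stated integrability hypotheses make this interchange legitimate via Fubini--Tonelli. Before that I would record two structural facts used throughout. First, since $z$ is $\{\cF_t\}$-adapted with sample paths a.s.\ in $L^1(0,T)$, it has a jointly measurable (indeed progressively measurable) modification, so $(s,\omega)\mapsto z(s,\omega)$ is $\cB([0,T])\otimes\cF$-measurable. Second, since $a\in L^1(0,T)$, the primitive $A(t)=\int_0^t a(u)\,du$ is absolutely continuous on $[0,T]$, hence $A$ and the factors $e^{\pm A(s)}$ are bounded on $[0,T]$; call $M$ a common bound.

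For \eqref{meanZ}: the factor $e^{A(t)}$ is deterministic, so it suffices to prove $\E\big[\int_0^t z(s)e^{-A(s)}\,ds\big]=\int_0^t\E[z(s)]e^{-A(s)}\,ds$. The integrand $(s,\omega)\mapsto z(s,\omega)e^{-A(s)}$ is jointly measurable by the first remark, and $\int_0^t \E[|z(s)|]\,e^{-A(s)}\,ds\le M\int_0^t\E[|z(s)|]\,ds<+\infty$ --- this finiteness is exactly what the hypotheses $z\in\cL^1(\Omega,\{\cF_t\}_{t\ge0};L^1(0,T))$ and $\E[z(t)]\in L^1(0,T)$ are there to guarantee --- so Fubini's theorem applies and gives \eqref{meanZ} for every $t\in[0,T]$.

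For \eqref{covZ}: write $\tilde z(s)=z(s)-\E[z(s)]$, so that, using \eqref{meanZ}, $Z(t)-\E[Z(t)]=e^{A(t)}\int_0^t \tilde z(u)e^{-A(u)}\,du$. Then
\begin{equation*}
\Cov(Z(t),Z(s))=e^{A(t)+A(s)}\,\E\!\left[\int_0^t\!\!\int_0^s \tilde z(u)\tilde z(v)\,e^{-A(u)-A(v)}\,du\,dv\right].
\end{equation*}
To move $\E$ inside the double integral I would invoke Fubini once more on $[0,t]\times[0,s]\times\Omega$: the integrand is jointly measurable, and by the Cauchy--Schwarz inequality $\E[|\tilde z(u)\tilde z(v)|]\le\sqrt{\D[z(u)]}\sqrt{\D[z(v)]}$, whence
\begin{equation*}
\int_0^t\!\!\int_0^s \E[|\tilde z(u)\tilde z(v)|]\,e^{-A(u)-A(v)}\,du\,dv\le M^2\Big(\int_0^T\!\sqrt{\D[z(u)]}\,du\Big)^{\!2}<+\infty,
\end{equation*}
the last bound because $\D[z(\cdot)]\in L^1(0,T)$ implies $\sqrt{\D[z(\cdot)]}\in L^2(0,T)\subset L^1(0,T)$ on the finite interval $[0,T]$. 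Applying Fubini and recognising $\E[\tilde z(u)\tilde z(v)]=\Cov(z(u),z(v))$ yields \eqref{covZ}.

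The argument has no real obstacle: the only point requiring attention is the verification of the two integrability estimates that justify exchanging $\E$ with the $ds$-integrals, and these follow from the boundedness of $e^{\pm A}$ on the compact interval together with, respectively, the $\cL^1$-hypothesis for the mean and the Cauchy--Schwarz bound plus $L^1$-integrability of $\D[z(\cdot)]$ for the covariance; no deeper idea is needed, which is why the lemma is ``easy''.
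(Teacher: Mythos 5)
Your proof is correct and follows exactly the route the paper takes: the paper's entire proof is the single sentence ``The proof follows from the application of Fubini's theorem,'' and you have simply supplied the measurability and integrability checks (boundedness of $e^{\pm A}$ on $[0,T]$, the $\cL^1$ hypothesis for the mean, Cauchy--Schwarz plus $\D[z(\cdot)]\in L^1(0,T)$ for the covariance) that justify the interchange. The only cosmetic caveat is that for Fubini--Tonelli one really wants $t\mapsto\E[|z(t)|]$ integrable rather than $t\mapsto\E[z(t)]$, a slight imprecision already present in the lemma's statement and not introduced by you.
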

The proof follows from the application of Fubini's theorem.\\
The previous Lemma, together with the independence of $Z(t)$ and $Y(t)$ and Equation \eqref{spliteq}, gives us the following Proposition.
\begin{prop}\label{prop_mean_cov}
	If $z \in \cL^2(\Omega,\{\mathcal{F}_t\}_{t\geq 0};L^1(0,T))$ is such that $\D[z(t)] \in L^1(0,T)$ then
	\begin{equation*}
	\E[X(t)]=e^{A(t)}\left(x_0+\int_0^t \E[z(s)]e^{-A(s)}ds\right).
	\end{equation*}
	and, for $s\le t$
	\begin{multline*}
	\Cov(X(t),X(s))=e^{A(t)+A(s)}\times\\\times\left[\sigma^2\int_0^{s}e^{-2A(u)}du+\int_0^t\int_0^s\Cov(z(u),z(v))e^{-A(u)-A(v)}dudv\right].
	\end{multline*}
\end{prop}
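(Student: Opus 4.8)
The plan is to reduce the statement to the additive decomposition $X(t)=Y(t)+Z(t)$ of Equation~\eqref{spliteq} and to exploit the independence of the Ornstein--Uhlenbeck part $Y(t)$ from the drift-integral part $Z(t)$, so that both formulas follow by combining data already computed: \eqref{Equ.2.4} for $Y$, and \eqref{meanZ}--\eqref{covZ} for $Z$.

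First I would treat the mean. By linearity of the expectation, $\E[X(t)]=\E[Y(t)]+\E[Z(t)]$; the first term equals $e^{A(t)}x_0$ by \eqref{Equ.2.4}, while the second is given by \eqref{meanZ} of the preceding Lemma. To invoke that formula one checks its hypotheses: $z\in\cL^2(\Omega,\{\cF_t\}_{t\ge0};L^1(0,T))\subseteq\cL^1(\Omega,\{\cF_t\}_{t\ge0};L^1(0,T))$, and $t\mapsto\E[z(t)]$ is integrable on $[0,T]$ (the latter is the only input not literally among the stated hypotheses; it holds in all the examples treated below, and it follows from $\int_0^T\E[z(t)^2]\,dt<+\infty$ whenever that is available). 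Adding the two contributions and pulling $e^{A(t)}$ out of the sum gives the asserted $\E[X(t)]=e^{A(t)}\big(x_0+\int_0^t\E[z(s)]e^{-A(s)}\,ds\big)$.

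Next, for the covariance with $s\le t$, I would expand by bilinearity,
\begin{equation*}
\Cov(X(t),X(s))=\Cov(Y(t),Y(s))+\Cov(Y(t),Z(s))+\Cov(Z(t),Y(s))+\Cov(Z(t),Z(s)).
\end{equation*}
Since $Y$ is a functional of $W$ alone and $Z$ is a functional of $z$ alone, while $W$ and $z$ are independent (the standing assumption of Section~\ref{Sec.2}), the whole process $Y$ is independent of the whole process $Z$; hence the two mixed terms $\Cov(Y(t),Z(s))$ and $\Cov(Z(t),Y(s))$ vanish. The remaining two terms are supplied by \eqref{Equ.2.4} and by \eqref{covZ} of the preceding Lemma (whose hypotheses are precisely $z\in\cL^2$ with $\D[z(\cdot)]\in L^1(0,T)$, assumed here); substituting them and collecting the common factor $e^{A(t)+A(s)}$ produces the displayed formula.

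The proof is essentially bookkeeping. The only two points needing (mild) care are the Fubini interchange behind \eqref{covZ}, already performed in the preceding Lemma, and the cancellation of the cross-covariances, which is exactly where the independence of the noise $W$ and the drift process $z$ enters; beyond these I do not expect any genuine obstacle.
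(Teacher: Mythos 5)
Your proposal is correct and follows exactly the route the paper takes: the paper's own ``proof'' is the one-line remark that the Proposition follows from the preceding Lemma, the independence of $Y(t)$ and $Z(t)$, and the decomposition \eqref{spliteq}, which is precisely the bookkeeping you carry out (linearity for the mean, bilinearity plus vanishing cross-covariances for the covariance). Your observation that $\E[z(\cdot)]\in L^1(0,T)$ is needed for the mean formula but not literally restated in the Proposition's hypotheses is a fair and slightly more careful reading than the paper's own.
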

\subsection{The $OU(a,\sigma,x_0)$ class}\label{Sec.2.2}
A particular solution of \eqref{Equ.2.1} is achieved when $z(t)$ is a degenerate stochastic process (i.e. a deterministic function). Indeed, for a function $f \in L^1(0,T)$ let us consider the equation
\begin{equation}\label{SDEapprox}
dX^f(t)=[a(t)X^f(t)+f(t)]dt+\sigma dW(t), \quad X(0)=x_0.
\end{equation}
The solution map $\cS$ can be still used, since we can consider \linebreak $L^1(0,T)\subset \cL^0(\Omega,\{\cF_t\}_{t \ge 0}; L^1(0,T))$ by identifying any deterministic function $f \in L^1(0,T)$ with the constant stochastic process $f(\omega)=f$ for any $\omega \in \Omega$. Thus we have that
\begin{equation*}
X^f(t)=\cS f(t)=e^{A(t)}x_0+\sigma e^{A(t)}\int_0^{t}e^{-A(s)}dW_s+e^{A(t)}\int_0^tf(s)e^{-A(s)}ds.
\end{equation*}
By using Equation \eqref{spliteq}, we have that $Z(t)$ is a deterministic function, hence it does not play any role in the auto-covariance function of $X^f$, which is now determined by \eqref{Equ.2.4}. $X^f$ is Gaussian since it is a sum of independent Gaussian processes and Markov property is ensured by the fact that is solution of \eqref{SDEapprox}. Now we define the class of processes of the form $X^f=\cS f$ for some $f \in L^1(0,T)$.
\begin{defn}
	The Ornstein-Uhlenbeck class $OU(a,\sigma,x_0)$ is defined as
	\begin{equation*}
	OU(a,\sigma,x_0):=\left\{X \in\mathcal{L}^2(\Omega,\{\cF_t\}_{t \ge 0},L^1(0,T)): \ \exists f \in L^1(0,T), \ X=\cS f \right\}.
	\end{equation*}
	We will denote $X^f \in OU(a,\sigma,x_0)$ to state that $X^f=\cS f$.
\end{defn}
As already mentioned in the introduction, a Gauss-Markov process is easier to handle than processes of the form $\cS z(t)$ for general $z \in \cL^0(\Omega,\{\cF_t\}_{t \ge 0}; L^1(0,T))$, and many existing tools and results about  these processes can be exploited. Thus it is interesting to understand how can we \textit{best} approximate a general solution of \eqref{Equ.2.1} with a process $X^f \in OU(a,\sigma,x_0)$.
\subsection{The approximation problem}\label{Sec.2.3}
Let us consider a process $z(t)\in \cL^0(\Omega,\{\mathcal{F}_t\}_{t\geq 0};L^1(0,T))$ and let us introduce a cost functional $\cJ$ on the class $OU(a,\sigma,x_0)$, defined, for any $X^f \in OU(a,\sigma,x_0)$, as
\begin{equation}\label{JXf}
\cJ[X^f]=\E\left[\int_0^T J(t,|X(t)-X^f(t)|)dt+\Phi(|X(T)-X^f(T)|)\right],
\end{equation}
for some functions $J(t,x)$ and $\Phi(x)$, where $X=\cS z$. The cost functional $\cJ$ represents the mean cost we are \textit{going to pay} for approximating the process $X$ with a process $X^f \in OU(a,\sigma,x_0)$. The function $J$ will be used to represent the running cost of the approximation, while $\Phi$ is the final cost. To find the \textit{best} approximation means that we want to find a process $\widetilde{X} \in OU(a,\sigma,x_0)$ such that
\begin{equation*}
\mathcal{J}[\widetilde{X}]=\min_{X^f \in OU(a,\sigma,x_0)}\cJ[X^f].
\end{equation*}
By using the definition of the solution map $\cS$ in Proposition \ref{prop2.1} and Equation \eqref{spliteq}, one obtains
\begin{equation*}
|X(t)-X^f(t)|=|Z(t)-F(t)|
\end{equation*}
where
\begin{equation*}
F(t)=e^{A(t)}\int_0^t f(s)e^{-A(s)}ds.
\end{equation*}
This means that actually
\begin{equation*}
\cJ[X^f]=\E\left[\int_0^T J(t,|Z(t)-F(t)|)dt+\Phi(|Z(T)-F(T)|)\right].
\end{equation*}
Let us now consider the space of absolutely continuous functions on $[0,T]$ (see \cite[Sections $6.4$ and $6.5$]{royden2010real}), i.e.
\begin{equation*}
AC([0,T]):=\left\{F \in C^0([0,T]): \ \exists f \in L^1(0,T), F(t)=F(0)+\int_0^tf(s)ds \ \forall t \in [0,T]\right\}
\end{equation*}
and let us define the map $\cI:L^1(0,T) \to \cA=\{F \in AC([0,T]): \ F(0)=0\}$ such that
\begin{equation*}
\cI f(t)=e^{A(t)}\int_0^t f(s)e^{-A(s)}ds.
\end{equation*}
This map is a bijection between $L^1(0,T)$ and $\cA$ since
it associates $f \in L^1(0,T)$ to the unique Caratheodory solution (see \cite[Theorem $1.1$ and $2.1$ of Section $2$]{cara}) $F$ of the Cauchy problem
\begin{equation*}
\begin{cases}
F'(t)=a(t)F(t)+f(t) & t \in (0,T),\\
F(0)=0.
\end{cases}
\end{equation*}
On the other hand for any $F \in \cA$ we have 
	\begin{equation}
	\cI^{-1}F(t)=F'(t)-a(t)F(t).
	\end{equation}
For any process $z \in \cL^0(\Omega,\{\cF_t\}_{t\ge0};L^1(0,T))$ we have, by Equation \eqref{spliteq}, $\cS z=Y+\cI z$. The fact that $\cI$ is a bijection proves the injectivity of $\cS$. Moreover, $\cS: \cL^0(\Omega,\{\cF_t\}_{t\ge0};L^1(0,T)) \to \cS(\cL^0(\Omega,\{\cF_t\}_{t\ge0};L^1(0,T)))$  is bijective and $OU(a,\sigma,x_0)\subset \cS(\cL^0(\Omega,\{\cF_t\}_{t\ge0};L^1(0,T)))$.\\
Now we can define a new functional directly on $\cA$ (that we will still denote with $\cJ$) that is the composition of the functional $\cJ$, the map $\cI$ and the inverse solution map $\cS^{-1}$ on $OU(a,\sigma,x_0)$, and is given by
\begin{equation*}
\cJ[F]=\E\left[\int_0^T J(t,|Z(t)-F(t)|)dt+\Phi(|Z(T)-F(T)|)\right],
\end{equation*}
for $F \in \cA$.
Being the maps $\cS$ and $\cI$ bijections, finding the minimizer $\widetilde{F}$ of $\cJ$ in $\cA$ gives us the best approximating process $\widetilde{X}=\cS \cI^{-1}\widetilde{F} \in OU(a,\sigma,x_0)$.\\
We can study these kind of cost functionals as particular cases of the more general cost functional
\begin{equation*}
\cJ[F]=\E\left[\int_0^T J(t,Z(t),F(t))dt+\Phi(Z(T),F(T))\right].
\end{equation*}
In particular, we want to find a $\widetilde{F} \in \cA$ such that
\begin{equation*}
\cJ[\widetilde{F}]=\min_{F \in \cA}\cJ[F].
\end{equation*}
In the following section we show that under some hypotheses this problem admits a unique solution and we find some necessary conditions that will be the main tools to actually find the minimizer $\widetilde{F} \in \cA$.
\section{Optimality conditions and existence of the solution of the approximation problem}\label{Sec.3}
\subsection{The main result}\label{Sec.3.1}
Let us state the problem in its full generality. Let us consider the stochastic process $Z(t) \in \cL^0(\Omega,\{\mathcal{F}_t\}_{t\geq 0};L^1(0,T))$ with a.s. continuous paths and let us define the probability measure flow $\mu_t=\cL[Z(t)]$ where  $\cL[X]$  denotes the law of a random variable $X$. 
Fix $T \ge 0$ and define 
\begin{equation*}
R_T=[0,T]\times \R.
\end{equation*}
For any measurable set $A \subseteq R_T$ define the section $A_t=\{x \in \R: \ (t,x)\in A\}$ for fixed $t \in [0,T]$. Then let us define the set function $\mu$ as follows
\begin{equation}\label{mu}
\mu(A)=\int_{0}^{T}\mu_t(A_t)dt \qquad \forall A\subseteq R_T \mbox{ measurable }.
\end{equation}
It is not difficult to check that $\mu$ is a measure.\\
One can also show that for any measurable function $f:R_T \to \R$ we have
\begin{equation}\label{intmu}
\int_{\Omega} f(t,z)\mu(dzdt)=\int_0^{T} \int_{\R}f(t,z)\mu_t(dz)dt=\int_0^{T}\E[f(t,Z(t))]dt.
\end{equation}
Consider now the functions 
\begin{equation}\label{JPhi}
J:(t,z,x)\in R_T \times \R \mapsto J(t,z,x)\in \R \ \mbox{ and } \  \Phi:(z,x)\in \R\times \R\mapsto \Phi(z,x)\in \R
\end{equation}
and define the functional $\mathcal{J}:AC([0,T])\to \R$ as
\begin{equation}\label{FunctionalJ}
\mathcal{J}[F]=\E\left[\int_0^{T}J(t,Z(t),F(t))dt+\Phi(Z(T),F(T))\right].
\end{equation}
We want to solve the following problem: 
\begin{equation}\label{prob}
\mbox{ find }\arg\min_{F \in \mathcal{A}}\mathcal{J}[F]
\end{equation}
where the admissible set is defined as
\begin{equation*}
\mathcal{A}=\{F \in AC([0,T]): \ F(0)=0\}.
\end{equation*}
We will consider the following assumptions, that we will explain while proving the main result:
\begin{itemize}
	\item[A1] There exists a function $F \in \cA$ such that $\cJ[F]<+\infty$;
	\item[A2] The functions $J(t,z,x)$ and $\Phi(z,x)$ defined in \eqref{JPhi} are non-negative for any $(t,z,x)\in R_T \times \R$;
	\item[A3] For fixed $(t,z) \in R_T$ the map $x \mapsto J(t,z,x)$ is in $C^1$;
	\item[A4] For any compact set $K \subset \R$ there exists a function $\Psi_K(t,z)\in L^1(R_T;\mu)$ such that
	\begin{equation*}
	\left|\pd{J}{x}(t,z,x)\right|\le \Psi_K(t,z), \quad \forall x \in K;
	\end{equation*}
	\item[A5] For fixed $z \in \R$ the map $x \mapsto \Phi(z,x)$ is in $C^1$;
	\item[A6] For any compact set $K \subset \R$ there exists a function $\Theta_K(z)\in L^1(\R;\mu_T)$ such that
	\begin{equation*}
	\left|\pd{\Phi}{x}(z,x)\right|\le \Theta_K(z) \quad \forall x \in K;
	\end{equation*}
	\item[A7] For any fixed $t \in [0,T]$, the map $x \mapsto \E[J(t,Z(t),x)]$ is strictly convex, decreasing as $x \to -\infty$ and increasing as $x \to +\infty$;
	\item[A8] There exist two constants $\alpha,M>0$, a function $h \in L^1(0,T)$ and an exponent $p>1$ such that for any $t \in [0,T]$ and $x \in \R$ with $|x|>M$
	\begin{equation*}
	\E[J(t,Z(t),x)]\ge \alpha(h(t)+|x|^p);
	\end{equation*}
	\item[A9] The map $x \mapsto \E[\Phi(Z(T),x)]$ is proper or constant;
	\item[A10] The map $x \mapsto \E[\Phi(Z(T),x)]$ is convex;
	\item[A11] The function
	\begin{equation*}
	(t,x)\mapsto \E[J(t,Z(t),x)]
	\end{equation*}
	is in $C^2(R_T \setminus \cN)$, where $\cN \subset R_T$ is such that $\cZ:=\{t \in [0,T]: \ \exists x \in \R, \ (t,x)\in \cN\}$ is at most finite, and $$\pdsup{}{x}{2}\E\left[J(t,Z(t),x)\right]>0$$ for any $(t,x)\not \in \cN$;
	\item[A12] The function
	\begin{equation}\label{eta}
	\eta(t)=-\frac{\pdmis{}{x}{t}\E[J(t,Z(t),F(t))]}{\pdsup{}{x}{2}\E[J(t,Z(t),F(t))]}
	\end{equation}
	belongs to $L^1(0,T)$, where $F(t)$ is a solution of
	\begin{equation*}
	\E\left[\pd{J}{x}(t,Z(t),F(t))\right]=0 \quad \forall t \in (0,T),
	\end{equation*}
	defined for $t \in \mathfrak{I}\supset [0,T]\setminus \cZ$, where $\cZ$ is at most a finite set;
	\item[A13] It holds true that
	\begin{equation*}
	\E\left[\pd{J}{x}(0,Z(0),0)\right]=0;
	\end{equation*}
	\item[A14] Given $a^*$ the (unique) solution of
	\begin{equation*}
	\E\left[\pd{J}{x}(T,Z(T),x)\right]=0
	\end{equation*}
	then it also holds
	\begin{equation*}
	\E\left[\pd{\Phi}{x}(Z(T),a^*)\right]=0.
	\end{equation*}
\end{itemize}
Although numerous, these assumptions are not so strict, neither unusual, as we will see later. Indeed, we will show that an important family of cost functions (the power costs) satisfies all the assumptions, depending on the regularity of the process $Z(t)$.Now we state the main result of the paper.
\begin{thm}\label{thmmain}
	Under assumptions $A1-A14$, there exists a unique solution $F^* \in \cA$ of \eqref{prob} and it is the unique solution of Equations
	\begin{equation}\label{eulan}
	\E\left[\pd{J}{x}(t,Z(t),F^*(t))\right]=0 \quad \forall t \in (0,T)
	\end{equation}
	and
	\begin{equation}\label{transv}
	\E\left[\pd{\Phi}{x}(Z(T),F^*(T))\right]=0.
	\end{equation}
\end{thm}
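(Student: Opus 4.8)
The key structural observation is that the functional $\cJ$ in \eqref{FunctionalJ} depends on $F$ only through its pointwise values, not through $F'$. So the minimization essentially \emph{decouples in $t$}, and the whole content of assumptions $A11$--$A14$ is that the resulting ``pointwise minimizer'' lies in the admissible class $\cA$. I would therefore reduce to a deterministic functional, construct that pointwise minimizer, check it is admissible, and then obtain existence, uniqueness and the characterization essentially for free from strict convexity.

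\textbf{Step 1 (reduction to a deterministic functional).} Using \eqref{intmu} and Fubini's theorem, and writing $\overline{J}(t,x):=\E[J(t,Z(t),x)]$ and $\overline{\Phi}(x):=\E[\Phi(Z(T),x)]$, I would first rewrite the functional as $\cJ[F]=\int_0^T \overline{J}(t,F(t))\,dt+\overline{\Phi}(F(T))$ for $F\in AC([0,T])$. Assumptions $A3$--$A4$ (resp.\ $A5$--$A6$) justify, via dominated convergence, differentiating under the expectation, so that $\pd{\overline{J}}{x}(t,x)=\E[\pd{J}{x}(t,Z(t),x)]$ and $\overline{\Phi}'(x)=\E[\pd{\Phi}{x}(Z(T),x)]$; in these terms \eqref{eulan} reads $\pd{\overline{J}}{x}(t,F^*(t))=0$ and \eqref{transv} reads $\overline{\Phi}'(F^*(T))=0$. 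By $A2$ both $\overline{J}$ and $\overline{\Phi}$ are non-negative, and by $A1$ there is $F_0\in\cA$ with $\cJ[F_0]<+\infty$.

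\textbf{Step 2 (the pointwise minimizer and its admissibility).} For each fixed $t$, assumption $A7$ makes $x\mapsto\overline{J}(t,x)$ strictly convex with the right behaviour at $\pm\infty$ ($A8$ reinforcing this via a coercive lower bound and guaranteeing finiteness of the infimum), so it has a unique minimizer $G(t)$, characterized as the unique zero of $\pd{\overline{J}}{x}(t,\cdot)$. Then I would show $G\in\cA$ as follows. Assumption $A11$ ($(t,x)\mapsto\overline{J}(t,x)$ is $C^2$ off a set whose time-projection $\cZ$ is finite, with $\pdsup{\overline{J}}{x}{2}>0$) combined with the implicit function theorem gives that $G$ is $C^1$ on $[0,T]\setminus\cZ$ with $G'=\eta$, where $\eta$ is the function \eqref{eta}; assumption $A12$ ($\eta\in L^1(0,T)$) then allows $G$ to be extended across the finitely many exceptional times as an absolutely continuous function, so $G\in AC([0,T])$; and assumption $A13$ gives $\pd{\overline{J}}{x}(0,0)=0$, hence $G(0)=0$ by uniqueness of the zero, so $G\in\cA$. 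Finally $A14$ says that $a^*:=G(T)$, the unique zero of $\pd{\overline{J}}{x}(T,\cdot)$, is also a zero of $\overline{\Phi}'$; together with the convexity of $\overline{\Phi}$ ($A10$) and $A9$ (which excludes the pathological ``convex but infimum not attained'' situation by forcing $\overline{\Phi}$ to be proper or constant), this shows that $G(T)$ is a global minimizer of $\overline{\Phi}$.

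\textbf{Step 3 (existence, uniqueness, characterization).} Set $F^*:=G$. For every $F\in\cA$ one has $\overline{J}(t,F(t))\ge\overline{J}(t,F^*(t))$ for a.e.\ $t$ and $\overline{\Phi}(F(T))\ge\overline{\Phi}(F^*(T))$, whence $\cJ[F]\ge\cJ[F^*]$; moreover $\cJ[F^*]\le\cJ[F_0]<+\infty$ by the same pointwise comparison with the $F_0$ from $A1$. Thus $F^*$ solves \eqref{prob}, and by construction it satisfies \eqref{eulan} and \eqref{transv}. For uniqueness, if $F\in\cA$ is another minimizer then $\int_0^T\big(\overline{J}(t,F(t))-\overline{J}(t,F^*(t))\big)\,dt+\big(\overline{\Phi}(F(T))-\overline{\Phi}(F^*(T))\big)=0$ with both summands $\ge0$, so $\overline{J}(t,F(t))=\overline{J}(t,F^*(t))$ for a.e.\ $t$, and strict convexity ($A7$) forces $F(t)=F^*(t)$ a.e., hence everywhere by continuity. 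The same strict-convexity argument shows conversely that any $F\in\cA$ solving \eqref{eulan} must equal the pointwise minimizer $F^*$, and then automatically satisfies \eqref{transv} through $A14$; this yields the claimed characterization of $F^*$ as the unique solution of \eqref{eulan}--\eqref{transv}.

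\textbf{Where the difficulty lies.} The elementary part is the $t$-by-$t$ existence and uniqueness of $G(t)$; the delicate part is Step~2, namely passing from that to $G\in\cA$. This needs (i) a careful application of the implicit function theorem to $(t,x)\mapsto\pd{\overline{J}}{x}(t,x)$ on $R_T\setminus\cN$, using $\pdsup{\overline{J}}{x}{2}>0$ from $A11$; (ii) extending $G$, a priori only $C^1$ off the finite set $\cZ$, continuously (in fact absolutely continuously) across each exceptional time, which is exactly where $\eta\in L^1$ from $A12$ enters through a Cauchy-type estimate; and (iii) justifying the interchange of $\pd{}{x}$ with $\E[\cdot]$ throughout, which is the role of the domination hypotheses $A4$ and $A6$. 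One could equivalently phrase Steps~1--3 through the Direct Method advertised in the introduction --- $A8$ and $A2$ make a minimizing sequence bounded in $L^p(0,T)$, convexity $A7$/$A10$ gives weak lower semicontinuity of the relaxed functional, and $A11$--$A14$ again identify the relaxed minimizer with the admissible $F^*$ --- but the pointwise route above is the most economical packaging of the same facts.
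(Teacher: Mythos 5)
Your proof is correct, but it takes a genuinely different and more economical route than the paper. The paper never exploits the fact that $\cJ$ contains no $F'$: it first derives \eqref{eulan} and \eqref{transv} as necessary conditions by a first-variation argument (its Theorem \ref{thmnec}, using $A1$--$A6$), then \emph{relaxes} the problem to the admissible set $L^p(0,T)\times\R$, proves weak lower semicontinuity of the running-cost term via Mazur's lemma, extracts a minimizer from a minimizing sequence using the coercivity $A8$ together with Banach--Alaoglu, and only afterwards upgrades the relaxed minimizer to an element of $\cA$ through exactly the regularity chain in your Step~2 (continuity from strict convexity, absolute continuity from the implicit function theorem and $\eta\in L^1$, the endpoint conditions from $A13$--$A14$). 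Your pointwise construction of $G(t):=\argmin_{x\in\R}\E[J(t,Z(t),x)]$ short-circuits the entire relaxation/compactness stage: existence, uniqueness and the characterization by \eqref{eulan}--\eqref{transv} all follow from the trivial comparison $\cJ[F]\ge\cJ[G]$ plus strict convexity, and $A8$--$A9$ become essentially redundant (in the paper they serve only to bound the minimizing sequence in $L^p$ and the terminal values $a_n$). What the paper's heavier route buys is robustness and by-products: it would survive a running cost depending on $F'$, where the problem no longer decouples in $t$, and it establishes solvability of the relaxed problem \eqref{rprob} on $L^p(0,T)\times\R$, which the paper uses afterwards when discussing non-uniqueness of the relaxed minimizer for $\Phi\equiv 0$. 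What your route buys is transparency and a cleaner accounting of which hypothesis does what.

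One spot where your sketch is thinner than the paper: the continuity of $G$ at the finitely many exceptional times $t_j\in\cZ$, where the implicit function theorem is unavailable. Writing $G(t)=G(0)+\int_0^t\eta(s)\,ds$ requires the one-sided limits of $G$ at each $t_j$ to agree with $G(t_j)$; the paper proves this as a separate lemma by a $\liminf$/$\limsup$ argument resting on $A7$ and the uniqueness of the zero of $x\mapsto\E\left[\pd{J}{x}(t_j,Z(t_j),x)\right]$. You should make that step explicit rather than folding it into ``$\eta\in L^1$ allows $G$ to be extended''.
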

From a probabilistic point of view by Equations \eqref{eulan} and \eqref{transv} we are asking for $F^*(t)$ to be, on average, a critical point of the running cost $J(t,Z(t),x)$ and the final cost $\Phi(Z(T),x)$. Furthermore, under our assumptions, one can show that Equation \eqref{eulan} can be also written as
\begin{equation*}
\pd{}{x}\E[J(t,Z(t),F^*(t))]=0,
\end{equation*}
that is to say that $F^*(t)$ is also a critical point of the mean of the running cost. The same holds for the final cost.\\
The proof of Theorem \ref{thmmain} will be given in Subsections \ref{Sec.3.2}, \ref{Sec.3.3}, and \ref{Sec.3.4}. The proof is structured as follows:
\begin{itemize}
\item First, in Subsection \ref{Sec.3.2} we find necessary optimality conditions in terms of Equations \eqref{eulan} and \eqref{transv}, which are the Euler-Lagrange equation and the Transversality Condition of the function $\mathcal{J}$ (see for instance \cite[Chapter $2$]{dacorogna2014introduction}), by using Assumptions $A1-A6$;
\item In Subsection \ref{Sec.3.3}, to obtain an existence result, we  need to \textit{relax} the problem, in the spirit of Calculus of Variations (see for instance \cite[Section $1.4$]{Dacorogna}), by introducing a more general functional on a more general admissible set. For the relaxed functional we are able to prove lower semicontinuity and then existence of the minimizer, by using Assumptions $A1-A9$;
\item In Subsection \ref{Sec.3.4} we show that exactly one of the minimizers belong to the admissible set $\cA$ and thus it is a minimizer of the original problem, by using the whole set of Assumptions and completing the proof of the Theorem.
\end{itemize}
\subsection{Necessary optimality conditions}\label{Sec.3.2}
In this section we perform the first step of our plan. Indeed, by using Assumptions $A1-A6$ we will prove that any minimizer of $\mathcal{J}$ in $\cA$ solves Equations \eqref{eulan} and \eqref{transv}. These equations will be the main tools to actually find a minimizer for $\mathcal{J}$.
We have the following optimality conditions, by means of the Euler-Lagrange equation and the transversality condition. The proof follows the ideas of \cite[Theorem $2.1$ Part $1$]{dacorogna2014introduction}, adapted to our case.
\begin{thm}\label{thmnec}
	Under Assumptions $A1-A6$, let $F^* \in \mathcal{A}$ be a solution of the problem \eqref{prob}. Then $F^*$ is solution of Equations \eqref{eulan} and \eqref{transv}.
\end{thm}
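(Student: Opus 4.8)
The plan is the classical first-variation argument, carried out with enough care that the interchange of the expectation, the time integral, and the differentiation in the variational parameter is legitimate. Fix an arbitrary direction $\varphi \in AC([0,T])$ with $\varphi(0)=0$, so that $F^* + \epsilon\varphi \in \mathcal{A}$ for every $\epsilon\in\R$ (the condition defining $\mathcal{A}$ is linear and $F^*(0)=0$), and set $g(\epsilon):=\mathcal{J}[F^*+\epsilon\varphi]$. Since $F^*$ solves \eqref{prob}, A1 gives $\mathcal{J}[F^*]=\min_{F\in\mathcal{A}}\mathcal{J}[F]\le \mathcal{J}[F_0]<+\infty$ for the function $F_0$ of A1, and nonnegativity of $\Phi$ (A2) then forces $\E\int_0^T J(t,Z(t),F^*(t))\,dt<+\infty$. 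Using A3 and the mean value theorem one has $J(t,z,x)\le J(t,z,F^*(t))+\Psi_K(t,z)\,\sup_{[0,T]}|\varphi|$ whenever $x=F^*(t)+\epsilon\varphi(t)$ with $|\epsilon|\le 1$ and $K$ is a fixed compact interval containing the range of $F^*$ enlarged by $\sup_{[0,T]}|\varphi|$; with the analogous bound for $\Phi$, together A4, A6 and \eqref{intmu} show $g(\epsilon)<+\infty$ for $|\epsilon|\le 1$. Thus $g$ is real-valued on $[-1,1]$ and attains a minimum at $\epsilon=0$.

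The core step is to prove that $g$ is differentiable at $0$ with
\[
g'(0)=\E\left[\int_0^T \pd{J}{x}(t,Z(t),F^*(t))\,\varphi(t)\,dt+\pd{\Phi}{x}(Z(T),F^*(T))\,\varphi(T)\right].
\]
The difference quotient of the running integrand equals $\pd{J}{x}(t,Z(t),\xi_{t,\epsilon})\,\varphi(t)$ for an intermediate point $\xi_{t,\epsilon}$ (A3 plus the mean value theorem), and for $|\epsilon|\le 1$ it is dominated by $\Psi_K(t,Z(t))\,\sup_{[0,T]}|\varphi|$; since $\E\int_0^T\Psi_K(t,Z(t))\,dt=\int_{R_T}\Psi_K\,d\mu<+\infty$ by A4 and \eqref{intmu}, dominated convergence permits differentiation under both $\E$ and $\int_0^T$. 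The terminal term is treated identically via A5, A6 and $\mu_T$. The same domination legitimizes Fubini, so, setting $p(t):=\E\left[\pd{J}{x}(t,Z(t),F^*(t))\right]\in L^1(0,T)$ and $c:=\E\left[\pd{\Phi}{x}(Z(T),F^*(T))\right]$ (finite), the minimality relation $g'(0)=0$ becomes
\[
\int_0^T p(t)\,\varphi(t)\,dt+c\,\varphi(T)=0\qquad\text{for every }\varphi\in AC([0,T])\text{ with }\varphi(0)=0.
\]

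To conclude, first restrict to $\varphi\in C_c^\infty(0,T)$: the boundary term disappears and $\int_0^T p(t)\varphi(t)\,dt=0$ for all such $\varphi$, so the fundamental lemma of the calculus of variations (du Bois-Reymond) forces $p=0$ a.e.\ on $(0,T)$, which is \eqref{eulan} (read in the a.e.\ sense at this stage; it upgrades to every $t\in(0,T)$ once the continuity of $t\mapsto p(t)$ available under the stronger hypotheses used later is invoked). Feeding $p\equiv 0$ back into the identity leaves $c\,\varphi(T)=0$ for every admissible $\varphi$; taking $\varphi(t)=t/T\in\mathcal{A}$, which satisfies $\varphi(T)=1$, yields $c=0$, i.e.\ \eqref{transv}.

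The one genuinely delicate point is the domination argument allowing simultaneous differentiation under the expectation and the time integral: this is precisely what the local $L^1(\mu)$ and $L^1(\mu_T)$ bounds of A3–A6, combined with the measure $\mu$ of \eqref{mu}–\eqref{intmu} and the boundedness of the continuous functions $F^*$ and $\varphi$ on $[0,T]$, are designed to supply. Everything else — the affine perturbation, the application of the fundamental lemma, and the single boundary-detecting test function $\varphi(t)=t/T$ — is routine.
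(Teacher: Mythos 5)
Your proof is correct and follows essentially the same route as the paper's: a first-variation argument with the perturbation $F^*+\epsilon\varphi$, domination via $\Psi_K$ and $\Theta_K$ to justify differentiation under the expectation and the time integral, the fundamental lemma to obtain the Euler--Lagrange equation, and then the boundary term to extract the transversality condition (the paper runs two separate passes with $\varphi\in C^\infty_c((0,T))$ and $\varphi\in C^\infty_c((0,T])$ rather than one pass with general $AC$ test functions, but this is cosmetic). Your explicit remark that the fundamental lemma only yields \eqref{eulan} almost everywhere at this stage is in fact slightly more careful than the paper's phrasing.
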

\begin{proof}
	Let us first observe that if $F^* \in \mathcal{A}$ is a minimizer for $\cJ$, then, by Assumption $A1$, $\mathcal{J}[F^*]<+\infty$. In particular
	\begin{equation*}
	\E\left[\int_0^T J(t,Z(t),F^*(t))dt\right]<+\infty
	\end{equation*}
	and thus $\int_0^T J(t,Z(t),F^*(t))dt<+\infty$ almost surely. In particular we can use Fubini's theorem to obtain
	\begin{align*}
	\mathcal{J}[F^*]&=\int_0^{T}\E[J(t,Z(t),F^*(t))]dt+\E[\Phi(Z(T),F^*(T))]\\
	&=\int_{0}^{T}\int_{\R}J(t,z,F^*(t))\mu_t(dz)dt+\int_{\R}\Phi(z,F^*(T))\mu_T(dz)\\
	&=\int_{\Omega}J(t,z,F^*(t))\mu(dzdt)+\int_{\R}\Phi(z,F^*(T))\mu_T(dz).
	\end{align*}
	First of all, let us fix $\varphi \in C^\infty_c((0,T))$ and define $F^*_\varepsilon(t)=F^*(t)+\varepsilon \varphi(t)$. Since $\varphi(T)=0$, then we have that
	\begin{equation*}
	\mathcal{J}[F^*_\varepsilon]=\E\left[\int_0^T J(t,Z(t),F^*(t)+\varepsilon \varphi(t))dt+\Phi(Z(T),F^*(T))\right].
	\end{equation*}
	Now observe that since $\varphi \in C^\infty_c((0,T))$, then $F^*_\varepsilon \to F^*$ uniformly as $\varepsilon \to 0$. In particular let us consider a tubular neighbourhood of $F^*$, i.e. $$Q_\delta=\{(t,x) \in [0,T]\times \R: \ x \in [F^*(t)-\delta,F^*(t)+\delta]\}$$
	and a compact set $K \subset \R$ such that $Q_\delta \subseteq [0,T]\times K$. Then there exists a $\varepsilon_0$ such that for $\varepsilon \in (-\varepsilon_0,\varepsilon_0)$ the couples $(t,F^*(t)+\varepsilon \varphi(t))\in Q_\delta$ and then $F^*(t)+\varepsilon \varphi(t)\in K$. Hence we have, by Assumption $A4$,
	\begin{align*}
	|J(t,Z(t),F^*(t)+\varepsilon\varphi(t))|&\le|J(t,Z(t),F^*(t)+\varepsilon\varphi(t))-J(t,Z(t),F^*(t))|\\
	&+|J(t,Z(t),F^*(t))|\\
	&\le \Psi_K(t,Z(t)) |\varphi(t)|+|J(t,Z(t),F^*(t))|.
	\end{align*}
	Taking the mean and then integrating with respect to time in the right hand side we have, by Equation \eqref{intmu},
	\begin{align*}
	\int_{0}^{T}& \E[|\Psi_K(t,Z(t))| |\varphi(t)|+|J(t,Z(t),F^*(t))|]dt \\
	&\le \Norm{\varphi}{L^\infty(0,T)}\int_{\Omega}|\Psi_K(t,z)|\mu(dzdt)+\int_{\Omega}|J(t,Z(t),F^*(t)|\mu(dzdt)<+\infty. 
	\end{align*}
	In particular we can use Fubini's theorem to obtain
	\begin{equation*}
	\E\left[\int_0^T|\Psi_K(t,Z(t))| |\varphi(t)|+|J(t,Z(t),F^*(t))|dt\right]<+\infty
	\end{equation*}
	and then $\Psi_K(t,Z(t))\varphi(t)+J(t,Z(t),F^*(t))$ is almost surely in $L^1(0,T)$. This implies that $J(t,Z(t),F^*(t)+\varepsilon\varphi(t))$ is almost surely in $L^1(0,T)$ and in particular
	\begin{equation*}
	\mathcal{J}[F^*_\varepsilon]=\int_{\Omega}J(t,z,F^*(t)+\varepsilon\varphi(t))\mu(dzdt)+\int_{\R}\Phi(z,F^*(T))\mu_T(dz).
	\end{equation*}
	Consider the function $g:\varepsilon \in (-\varepsilon_0,\varepsilon_0)\mapsto \mathcal{J}[F^*_\varepsilon]$ and observe that it admits a minimum in $\varepsilon=0$. Let us show that $g$ is in $C^1$. To do this, let us consider the function
	\begin{equation*}
	h:(t,z,\varepsilon)\in R_T \times (-\varepsilon_0,\varepsilon_0)\mapsto J(t,z,F^*(t)+\varepsilon \varphi(t))
	\end{equation*}
	such that
	\begin{equation*}
	g(\varepsilon)=\int_{R_T}h(t,z,\varepsilon)\mu(dzdt)+\int_{\R}\Phi(z,F^*(T))\mu_T(dz)
	\end{equation*}
	and observe that, $J(t,z,x)$ being a $C^1$ function in $x$ by Assumption $A3$, we have
	\begin{equation*}
	\pd{h}{\varepsilon}(t,z,\varepsilon)=\pd{J}{x}(t,z,F^*(t)+\varepsilon \varphi(t))\varphi(t)
	\end{equation*}
	and in particular
	\begin{equation*}
	\left|\pd{h}{\varepsilon}(t,z,\varepsilon)\right|\le \Norm{\varphi}{L^\infty}\Psi_K(t,z)
	\end{equation*}
	thus we have a uniform (with respect to $\varepsilon$) $L^1$ bound on the derivative of $h$. By differentiation under the integral sign we have that $g \in C^1$ and
	\begin{align*}
	g'(\varepsilon)&=\int_{\Omega}\pd{J}{x}(t,z,F^*(t)+\varepsilon \varphi(t))\varphi(t)\mu(dzdt)\\&=\int_0^{T}\E\left[\pd{J}{x}(t,Z(t),F^*(t)+\varepsilon \varphi(t))\right]\varphi(t)dt.
	\end{align*}
	Now, by Fermat's theorem, we know that $g'(0)=0$ and then, by the fact that we arbitrarily chose $\varphi \in C^\infty_c(0,T)$,
	\begin{equation}\label{intEphi}
	\int_0^{T}\E\left[\pd{J}{x}(t,Z(t),F^*(t))\right]\varphi(t)dt=0 \ \quad \forall \varphi \in C^\infty_c((0,T)).
	\end{equation}
	By Fundamental Lemma of Calculus of variation (see \cite[Theorem $3.40$]{Dacorogna}), Equation \eqref{intEphi} implies
	\begin{equation*}
	\E\left[\pd{J}{x}(t,Z(t),F^*(t))\right]=0 \quad \forall t \in (0,T).
	\end{equation*}
	Now let us choose again $\varphi \in C^\infty_c((0,T])$. Working as before on $\cJ[F^*_\varepsilon]$, by using also Assumption $A6$ (since $\varphi(T)\not = 0$) we have
	\begin{equation*}
	\mathcal{J}[F^*_\varepsilon]=\int_{\Omega}J(t,z,F^*(t)+\varepsilon\varphi(t))\mu(dzdt)+\int_{\R}\Phi(z,F^*(T)+\varepsilon \varphi(T))\mu_T(dz).
	\end{equation*}
	As done before, let us introduce a function $g(\varepsilon)=\mathcal{J}[F^*_\varepsilon]$ and let us define the function
	\begin{equation*}
	k(z,\varepsilon)=\Phi(z,F^*_\varepsilon(T))
	\end{equation*}
	to obtain
	\begin{equation*}
	g(\varepsilon)=\int_{\Omega}h(t,z,\varepsilon)\mu(dzdt)+\int_{\R}k(z,\varepsilon)\mu_T(dz).
	\end{equation*}
	Now let us show that $g$ is in $C^1(-\varepsilon_0,\varepsilon_0)$. To do that, we only need to work with $k$. We have, by Assumption $A5$,
	\begin{equation*}
	\pd{k}{\varepsilon}(z,\varepsilon)=\pd{\Phi}{x}(z,F^*(T)+\varepsilon\varphi(T))\varphi(T)
	\end{equation*}
	and then
	\begin{equation*}
	\left|\pd{k}{\varepsilon}(z,\varepsilon)\right|\le \varphi(T) \Theta_K(z).
	\end{equation*}
	Thus we can differentiate under the integral sign, obtaining
	\begin{multline*}
	g'(\varepsilon)=\int_0^{T}\E\left[\pd{J}{x}(t,Z(t),F^*(t)+\varepsilon \varphi(t))\right]\varphi(t)dt+\\+\E\left[\pd{\Phi}{x}(Z(T),F^*(T)+\varepsilon \varphi(T))\right]\varphi(T).
	\end{multline*}
	By using Fermat's Theorem, we have $g'(0)=0$ and then
	\begin{equation*}
	\int_0^{T}\E\left[\pd{J}{x}(t,Z(t),F^*(t))\right]\varphi(t)dt+\E\left[\pd{\Phi}{x}(Z(T),F^*(T))\right]\varphi(T)=0.
	\end{equation*}
	However, we already proved that $\E\left[\pd{J}{x}(t,Z(t),F^*(t))\right]=0$, hence, since we have arbitrarily chosen $\varphi \in C^\infty_c((0,T])$
	\begin{equation*}
	\E\left[\pd{\Phi}{x}(Z(T),F^*(T))\right]\varphi(T)=0, \quad \forall \varphi \in C_c^\infty((0,T])
	\end{equation*}
	from which we finally obtain
	\begin{equation*}
	\E\left[\pd{\Phi}{x}(Z(T),F^*(T))\right]=0.
	\end{equation*}
\end{proof}
\begin{rmk}
	Let us observe that Assumption $A1$ is a non-triviality assumption, to avoid functionals of the form $\cJ \equiv +\infty$. Assumption $A2$ is used instead to avoid the case $\inf \cJ=-\infty$. Concerning Assumptions $A3-A6$, they are typical Assumptions of $C^1$ regularity and integrability of the local Lipschitz constant.
\end{rmk}
\subsection{Existence of a minimizer for a relaxed problem}\label{Sec.3.3}
Now we introduce a relaxed problem. Indeed, we are not able to prove directly existence of the minimizer in the admissible set $\cA$. Hence we will ``enlarge'' this set and extend the functional in order to prove an existence result. This relaxation technique is typical of direct methods of Calculus of Variations (see \cite{Dacorogna}).
From now on we will split the functional $\mathcal{J}$ in two parts
\begin{align*}
\mathcal{I}_1[F]&=\E\left[\int_0^T J(t,Z(t),F(t))dt\right]\\
\mathcal{I}_2[F]&=\E[\Phi(Z(T),F(T))]
\end{align*}
such that $\mathcal{J}[F]=\mathcal{I}_1[F]+\mathcal{I}_2[F]$.\\
As we will see, the problem is in the $\mathcal{I}_1$ functional, since we are only able to prove weak lower-semicontinuity of this functional on the space $L^p(0,T)$, which is quite larger than $\cA$.
The proof of the next Lemma mimics the one of \cite[Theorem $3.20$]{Dacorogna}.
\begin{lem}
	Consider $p \ge 1$ and suppose we have a sequence $F_n \in L^p(0,T)$, a function $F \in L^p(0,T)$ such that $F_n \rightharpoonup F$ in $L^p$. If $x \mapsto \E[J(t,Z(t),x)]$ is convex $\forall t \in (0,T)$, then
	\begin{equation*}
	\liminf_{n}\mathcal{I}_1[F_n]\ge \mathcal{I}_1[F].
	\end{equation*}
\end{lem}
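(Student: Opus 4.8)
The plan is to run the classical Tonelli-type argument for weak lower semicontinuity of convex integral functionals, in the spirit of \cite[Theorem $3.20$]{Dacorogna}, adapted to the present probabilistic setting. The first step is to rewrite $\mathcal{I}_1$ in a deterministic form: since $J\ge 0$ by Assumption $A2$, Tonelli's theorem yields
\begin{equation*}
\mathcal{I}_1[F]=\int_0^T\E[J(t,Z(t),F(t))]\,dt=\int_0^T\overline{J}(t,F(t))\,dt,\qquad \overline{J}(t,x):=\E[J(t,Z(t),x)].
\end{equation*}
Here $\overline{J}\ge 0$ by $A2$, $x\mapsto\overline{J}(t,x)$ is convex for every $t$ by hypothesis, and Assumptions $A3$--$A4$ force $x\mapsto\overline{J}(t,x)$ to be continuous for a.e.\ $t$: from $A4$ one has $|J(t,z,x)-J(t,z,y)|\le\Psi_K(t,z)|x-y|$ for $x,y$ in a compact $K$, and since $\int_{R_T}\Psi_K\,d\mu<+\infty$ one gets $\E[\Psi_K(t,Z(t))]<+\infty$ for a.e.\ $t$, so $\overline{J}(t,\cdot)$ is locally Lipschitz there; finiteness of $\overline{J}(t,\cdot)$ on all of $\R$ for a.e.\ $t$ follows from Assumption $A1$ (which gives some $F_0\in\cA$ with $\overline{J}(t,F_0(t))<+\infty$ a.e.) together with the same local Lipschitz bound.

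The second step reduces to a pointwise limit via Mazur's lemma. Put $\ell:=\liminf_n\mathcal{I}_1[F_n]$; if $\ell=+\infty$ there is nothing to prove, so assume $\ell<+\infty$ and pass to a subsequence (not relabelled) with $\mathcal{I}_1[F_n]\to\ell$; it still converges weakly to $F$ in $L^p(0,T)$. By Mazur's lemma there exist convex combinations $G_m=\sum_{k=m}^{N_m}\lambda_k^{(m)}F_k$, with $\lambda_k^{(m)}\ge 0$ and $\sum_{k=m}^{N_m}\lambda_k^{(m)}=1$, such that $G_m\to F$ strongly in $L^p(0,T)$; extracting once more I may also assume $G_m(t)\to F(t)$ for a.e.\ $t$. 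Convexity of $\overline{J}(t,\cdot)$ (Jensen for finite convex combinations) gives $\overline{J}(t,G_m(t))\le\sum_{k=m}^{N_m}\lambda_k^{(m)}\overline{J}(t,F_k(t))$, hence, integrating,
\begin{equation*}
\mathcal{I}_1[G_m]\le\sum_{k=m}^{N_m}\lambda_k^{(m)}\mathcal{I}_1[F_k],
\end{equation*}
and the right-hand side tends to $\ell$ as $m\to+\infty$, since all indices $k$ occurring exceed $m$ and $\mathcal{I}_1[F_k]\to\ell$; thus $\limsup_m\mathcal{I}_1[G_m]\le\ell$.

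The third step is Fatou's lemma. For a.e.\ $t$, continuity of $\overline{J}(t,\cdot)$ gives $\overline{J}(t,G_m(t))\to\overline{J}(t,F(t))$, so by $\overline{J}\ge 0$,
\begin{equation*}
\mathcal{I}_1[F]=\int_0^T\overline{J}(t,F(t))\,dt\le\liminf_m\int_0^T\overline{J}(t,G_m(t))\,dt=\liminf_m\mathcal{I}_1[G_m]\le\ell=\liminf_n\mathcal{I}_1[F_n],
\end{equation*}
which is the assertion. The step I expect to be most delicate is not any single estimate but the verification that $\overline{J}$ is a genuine normal (Carath\'eodory) integrand --- jointly measurable in $(t,x)$, a.e.\ finite, and continuous in $x$ --- so that the Tonelli rewriting, the fibrewise Jensen inequality, and Fatou's lemma are all justified; this is precisely the role played here by Assumptions $A1$--$A4$.
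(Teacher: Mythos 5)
Your proof is correct and follows essentially the same route as the paper's: Mazur's lemma to pass from weak to strong convergence of convex combinations, fibrewise convexity of $x\mapsto\E[J(t,Z(t),x)]$ to control $\mathcal{I}_1$ on those combinations, and Fatou's lemma for the strong lower semicontinuity at the end. The only differences are organizational (you use tail convex combinations indexed from $m$ where the paper fixes an $\varepsilon$-tail, and you spell out the continuity and a.e.-finiteness of $\E[J(t,Z(t),\cdot)]$ via $A1$, $A3$--$A4$, a point the paper leaves implicit), and these do not change the substance of the argument.
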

\begin{proof}
	First of all, let us observe that since $J$ is continuous in $x$, then if $F_n \to F$ in $L^p$ we have, by Fatou's Lemma,
	\begin{align*}
	\liminf_{n}\mathcal{I}_1[F_n]&=\liminf_{n}\E\left[\int_0^T J(t,Z(t),F_n(t))dt\right] \\
	&\ge \E\left[\int_0^T \liminf_{n} J(t,Z(t),F_n(t))dt\right]=\mathcal{I}_1[F]
	\end{align*}
	so in particular $\mathcal{I}_1$ is strong lower semicontinuous.\\
	Now, if $\liminf_{n}\mathcal{I}_1[F_n]=+\infty$ the theorem is trivial. Suppose then $\liminf_{n}\mathcal{I}_1[F_n]=C<+\infty$ and suppose we are working with a subsequence (that, for the ease of the reader, we will still call $F_n$) such that $\lim_n \mathcal{I}_1[F_n]=C$. Fix $\varepsilon>0$ and observe that there exists a $\nu_\varepsilon$ such that for $n \ge \nu_\varepsilon$ we have $\mathcal{I}_1[F_n]\le C+\varepsilon$. Now, by Mazur's Theorem \cite[Theorem $3.9$]{Dacorogna} we know that there exists a sequence of integers $\{m_\mu\}_{\mu \in \N}$ with $m_\mu\ge \nu_\varepsilon$ and for each $\mu \in \N$ a vector $a_\mu \in \R^{m_\mu-\nu_\varepsilon}$ with $\sum_{i=1}^{m_\mu-\nu_\varepsilon}a_\mu^i=1$ such that, if we pose
	\begin{equation*}
	G_\mu=\sum_{i=1}^{m_\mu}a_\mu^i F_{i+\nu_\varepsilon},
	\end{equation*}
	we have $G_\mu \to F$ in $L^p$. However, by convexity of $\E[J(t,Z(t),x)]$ in $x$ (by also using Fubini's theorem) we have
	\begin{equation*}
	\mathcal{I}_1[G_\mu]\le \sum_{i=1}^{m_\mu}a_\mu^i\mathcal{I}_1[F_{i+\nu_\varepsilon}]\le C+\varepsilon.
	\end{equation*}
	Taking the $\liminf$ on $\mu$, from the strong lower semi-continuity, we have
	\begin{equation*}
	\mathcal{I}_1[F]\le \liminf_{\mu \to +\infty} \mathcal{I}_1[G_\mu] \le C+\varepsilon
	\end{equation*}
	Finally, we can send $\varepsilon \to 0$ to conclude.
\end{proof}
The latter result shows us that if we want to use an approach via minimizing sequences to find a minimizer, we can do this by substituting $L^p(0,T)$ for some $p \ge 1$ to $\cA$. However, $L^p$ functions are not defined on single points, thus for any $F \in L^p(0,T)$, $F(T)$ is not well-defined. Hence we need to split the action of $\cI_1$ and $\cI_2$, the first on $L^p(0,T)$, the second simply on $\R$. From now on, our admissible set will be composed of couples $(F,a)$ where $F \in L^p(0,T)$ and $a \in \R$. Let us define the relaxed admissible set
\begin{equation*}
\widetilde{\mathcal{A}}_p=\{(F,a)\in L^p(0,T)\times \R\}
\end{equation*}
and the relaxed functional
\begin{equation*}
\widetilde{\mathcal{J}}[(F,a)]=\mathcal{I}_1[F]+\mathcal{I}_2[a]
\end{equation*}
for $(F,a)\in \widetilde{\mathcal{A}}_p$. Then the relaxed problem is given by
\begin{equation}\label{rprob}
\mbox{ find }\arg\min_{(F,a)\in \widetilde{\mathcal{A}}_p}\widetilde{\mathcal{J}}[(F,a)].
\end{equation}
Now we can move to the next step, that is proving that the relaxed problem \eqref{rprob} admits a solution.
\begin{lem}
Under Assumption $A1-A9$, Problem \eqref{rprob} admits a solution.
\end{lem}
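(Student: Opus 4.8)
The plan is to run the direct method of the calculus of variations on the decoupled functional $\widetilde{\mathcal{J}}[(F,a)]=\mathcal{I}_1[F]+\mathcal{I}_2[a]$. First I would check that the infimum is finite and non-negative: by A1 there is $F\in\cA$ with $\cJ[F]<+\infty$, and since $F\in AC([0,T])\subset C^0([0,T])$ it is bounded, hence $F\in L^p(0,T)$ for every $p$; therefore $(F,F(T))\in\widetilde{\mathcal{A}}_p$ and $\widetilde{\mathcal{J}}[(F,F(T))]=\mathcal{I}_1[F]+\mathcal{I}_2[F(T)]=\cJ[F]<+\infty$, so $m:=\inf_{\widetilde{\mathcal{A}}_p}\widetilde{\mathcal{J}}<+\infty$, while $m\ge0$ by A2. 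Then I would pick a minimizing sequence $(F_n,a_n)$ with $\widetilde{\mathcal{J}}[(F_n,a_n)]\le m+1$ for all $n$; since by A2 both pieces $\mathcal{I}_1$ and $\mathcal{I}_2$ are non-negative, this yields the separate bounds $\mathcal{I}_1[F_n]\le m+1$ and $\mathcal{I}_2[a_n]\le m+1$.

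Next I would extract compactness from the two coercivity-type assumptions A8 and A9. For $\mathcal{I}_1$: splitting $[0,T]$ into $\{t:|F_n(t)|\le M\}$ (where $\E[J(t,Z(t),F_n(t))]\ge0$) and $\{t:|F_n(t)|>M\}$ (where by A8 $\E[J(t,Z(t),F_n(t))]\ge\alpha(h(t)+|F_n(t)|^p)$), integrating, and using $\mathcal{I}_1[F_n]\le m+1$, one obtains $\alpha\|F_n\|_{L^p(0,T)}^p\le \mathcal{I}_1[F_n]+\alpha\|h\|_{L^1(0,T)}+\alpha M^pT\le m+1+\alpha(\|h\|_{L^1(0,T)}+M^pT)$, so $\{F_n\}$ is bounded in $L^p(0,T)$; as $p>1$ this space is reflexive, so along a subsequence (not relabelled) $F_n\rightharpoonup F^*$ weakly in $L^p(0,T)$. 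For the endpoint variable: by A9 the map $a\mapsto\mathcal{I}_2[a]=\E[\Phi(Z(T),a)]$ is constant or proper. If it is constant, any $a^*$ will do; if it is proper, the sublevel set $\{a\in\R:\mathcal{I}_2[a]\le m+1\}$ is bounded (it is the preimage of a compact set under a proper map, using also $\mathcal{I}_2\ge0$), so $\{a_n\}$ is bounded and, along a further subsequence, $a_n\to a^*\in\R$.

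Finally I would pass to the limit by lower semicontinuity, separately on each piece. For $\mathcal{I}_1$ the weak convergence $F_n\rightharpoonup F^*$ is not enough for a direct Fatou argument, so here I invoke the preceding Lemma: $x\mapsto\E[J(t,Z(t),x)]$ is convex for every $t$ (it is strictly convex by A7), hence that Lemma gives $\liminf_n\mathcal{I}_1[F_n]\ge\mathcal{I}_1[F^*]$. For $\mathcal{I}_2$: by A5 the map $x\mapsto\Phi(z,x)$ is continuous, so $a_n\to a^*$ gives $\Phi(Z(T),a_n)\to\Phi(Z(T),a^*)$ a.s., and Fatou's lemma (legitimate because $\Phi\ge0$ by A2) gives $\liminf_n\mathcal{I}_2[a_n]\ge\mathcal{I}_2[a^*]$ (an equality in the constant case). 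Combining,
\begin{align*}
m&=\lim_n\widetilde{\mathcal{J}}[(F_n,a_n)]=\lim_n\big(\mathcal{I}_1[F_n]+\mathcal{I}_2[a_n]\big)\\
&\ge\liminf_n\mathcal{I}_1[F_n]+\liminf_n\mathcal{I}_2[a_n]\ge\mathcal{I}_1[F^*]+\mathcal{I}_2[a^*]=\widetilde{\mathcal{J}}[(F^*,a^*)]\ge m,
\end{align*}
so $(F^*,a^*)\in\widetilde{\mathcal{A}}_p$ realizes the minimum and solves \eqref{rprob}.

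I expect the delicate point to be the interplay between compactness and lower semicontinuity for $\mathcal{I}_1$: assumption A8 only buys boundedness in $L^p$, hence merely \emph{weak} convergence of the minimizing sequence, and $\mathcal{I}_1$ is not lower semicontinuous along weakly convergent sequences without convexity of $x\mapsto\E[J(t,Z(t),x)]$ — which is precisely why A7 and the Mazur-type argument of the previous Lemma are needed. By contrast the endpoint term is easy, because the relaxed admissible set $\widetilde{\mathcal{A}}_p$ decouples it onto $\R$, where boundedness of a minimizing sequence (via the properness half of A9) together with continuity of $\Phi$ and Fatou suffices; this decoupling is, in fact, the whole reason for passing from $\cA$ to $\widetilde{\mathcal{A}}_p$.
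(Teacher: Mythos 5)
Your proposal is correct and follows essentially the same route as the paper: a minimizing sequence, the coercivity bound from A8 giving $L^p$-boundedness, Banach--Alaoglu plus reflexivity for weak convergence, the preceding convexity lemma for weak lower semicontinuity of $\mathcal{I}_1$, and properness from A9 to handle the endpoint variable. The only cosmetic difference is that you close the endpoint term with Fatou's lemma and nonnegativity, whereas the paper uses dominated convergence via the bound $\Theta_K$ from A6; both suffice.
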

\begin{proof}
	Let us first consider the case in which $x \mapsto \E[\Phi(Z(T),x)]$ is a proper map. If $\inf_{(F,a)\in \widetilde{\mathcal{A}}_p} \widetilde{\mathcal{J}}[(F,a)]=+\infty$, the solution is trivial. Thus let us suppose $\inf_{(F,a)\in \widetilde{\mathcal{A}}_p} \widetilde{\mathcal{J}}[(F,a)]=L\ge 0$. Let us then consider a sequence $(F_n,a_n) \in \widetilde{\mathcal{A}}_p$ such that $\lim_{n}\widetilde{\mathcal{J}}[(F_n,a_n)]=L$. In particular we can suppose that $$\widetilde{\mathcal{J}}[(F_n,a_n)]<L+1, \  \forall n \in \N.$$
	First of all, we have that
	\begin{equation*}
	\mathcal{I}_1[F_n]=\E\left[\int_0^T J(t,Z(t),F_n(t))dt\right]<L+1.
	\end{equation*}
	By Fubini's Theorem we have that
	\begin{equation*}
	\int_0^T \E[J(t,Z(t),F_n(t))]dt<L+1.
	\end{equation*}
	Now let us define $\mathcal{M}_n=\{t \in [0,T]: \ |F_n(t)|>M\}$. By using Assumption $A8$ we have
	\begin{multline*}
	\alpha \left(\int_{\mathcal{M}_n}h(t)dt+\int_{\mathcal{M}_n}|F_n(t)|^pdt\right)\le \\ \le \int_{\mathcal{M}_n} \E[J(t,Z(t),F_n(t))]dt\le \\ \le \int_0^T \E[J(t,Z(t),F_n(t))]dt<L+1
	\end{multline*}
	and then
	\begin{equation}\label{est1}
	\int_{\mathcal{M}_n}|F_n(t)|^pdt<\frac{L+1}{\alpha}-\int_{\mathcal{M}_n}h(t)dt\le \frac{L+1}{\alpha}+\Norm{h}{L^1(0,T)}.
	\end{equation}
	At the same time we have
	\begin{equation}\label{est2}
	\int_{[0,T]\setminus \mathcal{M}_n}|F(t)|^pdt\le TM^p.
	\end{equation}
	Thus we have, by summing Equations \eqref{est1} and \eqref{est2}
	\begin{equation*}
	\Norm{F_n}{L^p(0,T)}^p<\frac{L+1}{\alpha}+\Norm{h}{L^1(0,T)}+TM^p, \ \forall n \in \N
	\end{equation*}
	and then, by Banach-Alaoglu theorem \cite[Theorem $3.16$]{brezis2010functional} (and the fact that $L^p$ is reflexive \cite[Theorem $4.10$]{brezis2010functional}), there exists a $F \in L^p(0,T)$ such that (up to a subsequence) $F_n \rightharpoonup F$ in $L^p$.\\
	Moreover, by the weak lower-semicontinuity of $\mathcal{I}_1$ we have
	\begin{equation*}
	\mathcal{I}_1[F]\le \liminf_{n}\mathcal{I}_1[F_n].
	\end{equation*}
	Now, we also have that
	\begin{equation*}
	\E[\Phi(Z(T),a_n)]\le L+1, \ \forall n \in \N
	\end{equation*}
	thus, since $x \mapsto \E[\Phi(Z(T),x)]$ is a proper map by Assumption $A9$, there exists $M>0$ such that $|a_n|\le M$. Thus there exists $a \in \R$ such that (up to a subsequence) $a_n \to a$. Moreover, we have that for any $n \in \N$, posing $K=[-M,M]$
	\begin{equation*}
	\Phi(Z(T),a_n)\le 2M\Theta_K(Z(T))+\Phi(Z(T),a_1)
	\end{equation*}
	hence we can use dominated convergence theorem to conclude that
	\begin{equation*}
	\lim_{n}\mathcal{I}_2[a_n]=\mathcal{I}_2[a].
	\end{equation*}
	Finally, we have that
	\begin{multline*}
	\widetilde{\mathcal{J}}[(F,a)]=\mathcal{I}_1[F]+\mathcal{I}_2[a]\le \\ \le \liminf_{n \in \N}\mathcal{I}_1[F_n]+\lim_{n}\mathcal{I}_2[a_n]=\\=\liminf_{n}\widetilde{\mathcal{J}}[(F_n,a_n)]=L.
	\end{multline*}
	If the map $x \mapsto \E[\Phi(Z(T),x)]$ is constant, the term in $\Phi$ is actually a \textit{dummy} term and it is useless in the minimization problem. Hence we can neglect it and the statement still holds true.
\end{proof}
\begin{rmk}
	We did not use $A7$ but only the fact that for any fixed $t \in [0,T]$ the map $x \mapsto \E[J(t,Z(t),x)]$ is convex. The final formulation of $A7$ is needed to guarantee that if $(F,a)$ and $(\widetilde{F},b)$ are solutions, then $F=\widetilde{F}$ in $L^p(0,T)$.
\end{rmk}
Let us also recall that, with the same strategy used before, we can show the following necessary optimality conditions for the minima of the relaxed problem.
\begin{lem}\label{thmnecr}
	Let $(F^*,a^*) \in \widetilde{\cA}_p$ be a solution of the problem \eqref{rprob}. Then, under assumptions $A1-A6$,
	\begin{equation}\label{eulanr}
	\E\left[\pd{J}{x}(t,Z(t),F^*(t))\right]=0 	\qquad \mbox{for almost all } t \in (0,T)
	\end{equation}
	and
	\begin{equation}\label{transvr}
	\E\left[\pd{\Phi}{x}(Z(T),a^*)\right]=0.
	\end{equation}
\end{lem}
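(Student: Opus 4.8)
The plan is to mimic the proof of Theorem \ref{thmnec} almost verbatim, with the only genuine change being that the perturbation argument now has to be performed separately in the $L^p$-variable $F$ and in the real variable $a$, since in the relaxed setting these two are decoupled. So first I would fix $\varphi \in C_c^\infty((0,T))$ and set $F^*_\varepsilon = F^* + \varepsilon\varphi$, $a^*_\varepsilon = a^*$; then $\widetilde{\mathcal{J}}[(F^*_\varepsilon,a^*_\varepsilon)] = \mathcal{I}_1[F^*_\varepsilon] + \mathcal{I}_2[a^*]$, and the term $\mathcal{I}_2[a^*]$ is constant in $\varepsilon$. The map $g(\varepsilon)=\widetilde{\mathcal{J}}[(F^*_\varepsilon,a^*_\varepsilon)]$ then reduces to exactly the function analysed in Theorem \ref{thmnec}: using $A1$ to get $\mathcal{I}_1[F^*]<+\infty$ (hence $\int_0^T J(t,Z(t),F^*(t))dt<+\infty$ a.s.), using $A3$ for differentiability of $J$ in $x$, and using $A4$ (with a compact $K$ containing a tubular neighbourhood of $F^*$, which exists because $F^*\in L^p$ is only defined a.e. but the neighbourhood can be taken in measure, or more simply because for fixed $\varphi$ the values $F^*(t)+\varepsilon\varphi(t)$ stay in a bounded set for $t$ outside a null set — here one must be slightly more careful than in the $\mathcal{A}$ case since $F^*$ need not be bounded, so $K$ should be chosen as $K_n=[-n,n]$ on the set $\{|F^*|\le n\}$ and one argues on each such set, or one simply notes $\varphi$ has compact support and $F^*$ is finite a.e.) to get the uniform $L^1(\mu)$ bound $\left|\pd{h}{\varepsilon}(t,z,\varepsilon)\right|\le \Norm{\varphi}{L^\infty}\Psi_{K}(t,z)$. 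Differentiation under the integral sign gives $g'(\varepsilon)=\int_0^T \E\left[\pd{J}{x}(t,Z(t),F^*(t)+\varepsilon\varphi(t))\right]\varphi(t)dt$, and Fermat's theorem at $\varepsilon=0$ together with the arbitrariness of $\varphi$ and the Fundamental Lemma of Calculus of Variations \cite[Theorem $3.40$]{Dacorogna} yields \eqref{eulanr} for almost all $t\in(0,T)$.

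For the transversality-type condition \eqref{transvr}, I would instead perturb only in the real variable: fix $b\in\R$, set $a^*_\varepsilon = a^* + \varepsilon b$ and keep $F^*_\varepsilon=F^*$. Then $\widetilde{\mathcal{J}}[(F^*,a^*_\varepsilon)] = \mathcal{I}_1[F^*] + \E[\Phi(Z(T),a^*+\varepsilon b)]$, the first term now being constant in $\varepsilon$. Using $A5$ for $C^1$-regularity of $\Phi$ in $x$ and $A6$ for the bound $\left|\pd{\Phi}{x}(z,x)\right|\le \Theta_K(z)\in L^1(\R;\mu_T)$ on the compact $K=[a^*-|b|,a^*+|b|]$, differentiation under the integral sign gives that $\varepsilon\mapsto\mathcal{I}_2[a^*+\varepsilon b]$ is $C^1$ with derivative $b\,\E\left[\pd{\Phi}{x}(Z(T),a^*+\varepsilon b)\right]$. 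Fermat's theorem at $\varepsilon=0$ gives $b\,\E\left[\pd{\Phi}{x}(Z(T),a^*)\right]=0$, and since $b\in\R$ was arbitrary we conclude \eqref{transvr}. Note that, unlike in Theorem \ref{thmnec}, here there is no need to first establish \eqref{eulanr} and then subtract it off: the decoupling of the relaxed functional means the two conditions are obtained by two independent one-parameter perturbations.

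The only mild obstacle I anticipate is the integrability bookkeeping in the first step: because $F^*\in L^p(0,T)$ need not be essentially bounded, the compact set $K$ used in $A4$ cannot be chosen uniformly, so the cleanest route is to argue on the increasing sets $E_n=\{t\in(0,T):|F^*(t)|\le n\}$, obtain \eqref{eulanr} for a.e. $t\in E_n$ using $K=[-n-\Norm{\varphi}{L^\infty},n+\Norm{\varphi}{L^\infty}]$, and then let $n\to\infty$ so that $\bigcup_n E_n$ is $(0,T)$ up to a null set. Alternatively, since $\mathcal{I}_1[F^*]<+\infty$ already forces $J(t,Z(t),F^*(t))$ to be integrable, one can localize the test functions $\varphi$ to have support inside $E_n$ and obtain the same conclusion; either way the argument is routine and all the analytic content is already contained in the proof of Theorem \ref{thmnec}.
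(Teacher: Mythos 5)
Your proof is correct and follows the same strategy the paper itself invokes: the paper gives no separate proof of this lemma, merely noting it follows ``with the same strategy used before'', i.e.\ the perturbation--Fermat--fundamental-lemma argument of Theorem \ref{thmnec}, and your decoupled one-parameter perturbations in $F$ and in $a$ are exactly the natural adaptation, with the one genuine extra technical point (that $F^*\in L^p(0,T)$ need not be bounded, so Assumption $A4$ must be applied after localizing to $E_n=\{|F^*|\le n\}$) correctly identified and handled. Only a minor precision: since $E_n$ is merely measurable, the localization should be implemented by perturbing with $\varepsilon\varphi\chi_{E_n}$ (admissible precisely because the relaxed class is all of $L^p(0,T)\times\R$) rather than by smooth test functions ``supported in $E_n$''; with that reading your argument goes through as written.
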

\subsection{Gain of regularity}\label{Sec.3.4}
The solutions we found for the relaxed problem at this stage cannot be used for the original problem:
\begin{itemize}
\item $F^* \in L^p(0,T)$ while we want it in $AC$;
\item $F^*(T)$ is not well-defined, but, even if it were, we are not sure that, in any case, $F^*(T)=a^*$.
\end{itemize}
For these reasons we have to show that we can gain regularity of the solution, in the sense that we can find a solution $F^*$ of the relaxed problem \eqref{rprob} that is more regular than simply $L^p(0,T)$ for some $p \in (1,+\infty)$.\\
First of all, let us show that, under our assumptions, the first part of the solution $F^*$ is unique and continuous, while $a^*$ can only vary in an interval.
\begin{lem}\label{propcont}
	Under Assumptions $A1-A10$, there exists a unique $F^* \in L^p(0,T)$ and a unique interval $I \subseteq \R$ such that for any $a^* \in I$ the couple $(F^*,a^*) \in \widetilde{\cA}_p$ is a solution of the problem \eqref{rprob}. Moreover $F^*$ admits a continuous modification in $[0,T]$ for which the equation \eqref{eulanr} holds for any $t \in [0,T]$.
\end{lem}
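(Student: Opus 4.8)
The plan is to exploit the product structure of the relaxed functional. Since $\widetilde{\mathcal{J}}[(F,a)]=\mathcal{I}_1[F]+\mathcal{I}_2[a]$ is a sum of two terms with disjoint dependencies, a couple $(F^*,a^*)$ solves \eqref{rprob} if and only if $F^*$ minimizes $\mathcal{I}_1$ over $L^p(0,T)$ and $a^*$ minimizes $\psi(a):=\mathcal{I}_2[a]=\E[\Phi(Z(T),a)]$ over $\R$; both minimizers exist by the previous Lemma, so the solution set of \eqref{rprob} is the product of $\argmin\mathcal{I}_1$ and $\argmin\psi$.

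First I would settle uniqueness of $F^*$ and the interval structure of the $a$-component. Writing $\phi(t,x):=\E[J(t,Z(t),x)]$, so that $\mathcal{I}_1[F]=\int_0^T\phi(t,F(t))\,dt$ by Fubini and $A2$, strict convexity of each $\phi(t,\cdot)$ (Assumption $A7$) makes $\mathcal{I}_1$ strictly convex: if $F_0,F_1\in L^p(0,T)$ both attain $m:=\min\mathcal{I}_1<+\infty$ (finite by $A1$ and $A2$, since $\cA\subset L^p(0,T)$ and $\mathcal{I}_1\le\mathcal{J}$), then testing the midpoint $\tfrac12(F_0+F_1)\in L^p(0,T)$ with the pointwise inequality $\phi\big(t,\tfrac{F_0(t)+F_1(t)}{2}\big)\le\tfrac12\phi(t,F_0(t))+\tfrac12\phi(t,F_1(t))$ and using $m\le\mathcal{I}_1\big[\tfrac12(F_0+F_1)\big]$ forces equality for a.a.\ $t$, whence $F_0=F_1$ a.e.\ by strict convexity. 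For the final term, $\psi$ is convex by $A10$, lower semicontinuous by $A5$ and Fatou, and proper or constant by $A9$; its set of minimizers, nonempty by the previous Lemma, is therefore a closed interval $I\subseteq\R$ (all of $\R$ when $\psi$ is constant). This establishes the first assertion.

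The substantial part is the continuous modification. Set $\Xi(t,x):=\E[\partial_x J(t,Z(t),x)]$, which by $A3$--$A4$ and dominated convergence is well defined, continuous in $x$, and equals $\partial_x\phi(t,x)$; for each fixed $t$ it is strictly increasing (the derivative of a strictly convex function that decreases near $-\infty$ and increases near $+\infty$ by $A7$), hence has a unique zero $\gamma(t)$. Lemma~\ref{thmnecr} applied to $(F^*,a^*)$ gives $\Xi(t,F^*(t))=0$ for a.a.\ $t$, so $F^*(t)=\gamma(t)$ a.e., and it suffices to show $\gamma$ has a continuous representative on $[0,T]$ solving $\Xi(t,\gamma(t))=0$ for every $t$. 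On $R_T\setminus\cN$ one has $\phi\in C^2$ with $\partial_{xx}\phi>0$ ($A11$), and since $\cZ$ is finite, $[0,T]\setminus\cZ$ splits into finitely many open intervals; on each of them the implicit function theorem applied to $\partial_x\phi(t,x)=0$ shows $\gamma$ is $C^1$ with $\gamma'(t)=-\frac{\partial_{tx}\phi(t,\gamma(t))}{\partial_{xx}\phi(t,\gamma(t))}=\eta(t)$, the function of $A12$. Since $\eta\in L^1(0,T)$, on each such interval $\gamma(t)=\gamma(t_0)+\int_{t_0}^t\eta(s)\,ds$ is absolutely continuous, so all its one-sided limits — in particular at the finitely many points of $\cZ$ and at $0$ and $T$ — exist and are finite; passing to the limit in $\Xi(t,\gamma(t))=0$ as $t\to\tau^{\pm}$ for $\tau\in\cZ$, each one-sided limit is a zero of $\Xi(\tau,\cdot)$, so by uniqueness of that zero the two limits coincide with $\gamma(\tau)$ and $\Xi(\tau,\gamma(\tau))=0$. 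Thus $\gamma$ extends to a locally absolutely continuous function, which is a continuous modification of $F^*$ satisfying \eqref{eulanr} at every $t\in[0,T]$.

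The hard part is this last step: ruling out jumps of $\gamma$ across the exceptional set $\cZ$ and recovering the Euler--Lagrange identity there, which requires enough joint continuity of $(t,x)\mapsto\E[\partial_x J(t,Z(t),x)]$ to pass to the limit in $\Xi(t,\gamma(t))=0$ near a point of $\cZ$ — precisely what the $C^2$-hypothesis $A11$ (with $\cZ$ finite) and the integrability $A12$ are designed to provide. The decoupling and the strict-convexity uniqueness are routine; identifying $\gamma'=\eta$ via the implicit function theorem and controlling $\gamma$ through the $L^1$ bound on $\eta$ is the technical core.
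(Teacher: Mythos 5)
Your decoupling of the relaxed functional, the strict-convexity argument for uniqueness of $F^*$, and the identification of $\argmin_{x}\E[\Phi(Z(T),x)]$ as an interval via $A10$ all match the paper's proof of the first assertion. The divergence --- and the problem --- is in the continuity part. The lemma is stated under Assumptions $A1$--$A10$ only, but your construction of the continuous representative leans essentially on $A11$ (the $C^2$ regularity off $\cN$, needed for the implicit function theorem) and $A12$ (the $L^1$ bound on $\eta$, needed to integrate $\gamma'$ and control the one-sided limits at points of $\cZ$). Neither is available here; in the paper those two assumptions enter only in the \emph{next} lemma, which upgrades continuity to absolute continuity. So as written you have proved the statement under strictly stronger hypotheses, i.e.\ a different lemma.

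The paper's own argument shows $A11$--$A12$ are not needed for mere continuity: fix $t_0$ and a sequence $t_n\to t_0$ of points where \eqref{eulanr} holds, pass to subsequences realizing $\liminf_n F^*(t_n)$ and $\limsup_n F^*(t_n)$, use continuity to conclude that both limits are zeros of $x\mapsto\E\left[\pd{J}{x}(t_0,Z(t_0),x)\right]$, rule out $\pm\infty$ by the coercivity in $A7$, and conclude $\liminf=\limsup$ from the injectivity of that map (strict convexity, again $A7$); this defines the limit at every $t_0$ and hence the continuous modification, with no differentiability of $\gamma$ required. Your route is salvageable in the context of Theorem \ref{thmmain}, where $A11$--$A14$ are assumed anyway, and it has the merit of delivering absolute continuity in one pass; but it does not prove the lemma as stated. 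A secondary point: at a point $\tau\in\cZ$, Assumption $A11$ gives you no regularity of $(t,x)\mapsto\E\left[\pd{J}{x}(t,Z(t),x)\right]$ at all (that is exactly the excluded set), so your passage to the limit in $\Xi(t,\gamma(t))=0$ as $t\to\tau^{\pm}$ must be justified instead by $A3$--$A4$ and dominated convergence --- the same ingredient the paper's sequential argument relies on.
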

\begin{proof}
	Since the map $x \mapsto \E[\Phi(Z(T),x)]$ is convex by Assumption $A10$, we know that \linebreak $\argmin_{x \in \R}\E[\Phi(Z(T),x)]$ must be a convex set, hence, being non-empty by the previous theorem, it must be an interval $I$. Moreover the strict convexity of the map $x \mapsto \E[J(t,Z(t),x)]$ given by Assumption $A7$ ensures that the minimizer $F^* \in L^p(0,T)$ (that exists by the previous theorem) is unique.\\
	Concerning the continuity, fix $t_0 \in [0,T]$ and consider $t_n \to t_0$ such that $t_n$ are points for which Equation \eqref{eulanr} is satisfied. We obviously have
	\begin{equation*}
	\lim_{n \to +\infty}\E\left[\pd{J}{x}(t_n,Z(t_n),F^*(t_n))\right]=0.
	\end{equation*}
	Now observe that  there exists a subsequence of $F^*(t_n)$ that converges to  $\liminf_{n}F^*(t_n)$. In particular, taking the limit on such subsequence, we have, by Assumption $A3$,
	\begin{equation*}
	\E\left[\pd{J}{x}(t_0,Z(t_0),\liminf_{n}F^*(t_n))\right]=0.
	\end{equation*}
	In the same way, we also have
	\begin{equation*}
	\E\left[\pd{J}{x}(t_0,Z(t_0),\limsup_{n}F^*(t_n))\right]=0.
	\end{equation*}
	Now, since $x \mapsto \E[J(t,Z(t),x)]$ is strictly convex for any $t \in [0,T]$, we know that $x \mapsto \E\left[\pd{J}{x}(t,Z(t),x)\right]$ is injective, thus for any $t \in [0,T]$ the equation
	\begin{equation}\label{eqnec}
	\E\left[\pd{J}{x}(t_0,Z(t_0),x)\right]=0
	\end{equation}
	admits a unique solution and then $\liminf_{n}F^*(t_n)=\limsup_{n}F^*(t_n)$. We have shown that $\lim_n F^*(t_n)$ is well-defined. Now let us observe that being $x \mapsto \E[J(t,Z(t),x)]$ decreasing as $x \to -\infty$ and increasing as $x \to +\infty$, $\lim_n F^*(t_n)\not = \pm \infty$ by Assumption $A7$. Thus we have $\lim_{n}F^*(t_n)\in \R$. Now let us distinguish two cases. If $t_0 \in [0,T]$ is one of the point in which the necessary condition \eqref{eulanr} is already satisfied, we have, by uniqueness of the solution of Equation \eqref{eqnec}, $F^*(t_0)=\lim_{n}F^*(t_n)$ and then $F^*$ is continuous in $t_0$.\\
	If $t_0$ is not one of these points, we can modify $F^*$ on $t_0$ in such a way that $F^*(t_0)=\lim_{n}F^*(t_n)$. Being the set of $t_0 \in [0,T]$ for which the necessary condition is not satisfied a zero-measure set, we can conclude that $F^*$ admits a continuous modification in $[0,T]$.
\end{proof}
It is still not enough: we do not want $F^*$ to be simply continuous, but absolutely continuous. However, under our hypotheses we do not only obtain that $F^*$ is absolutely continuous, but we can exploit its derivative (almost everywhere).
\begin{lem}\label{propabscont}
	Under Assumptions $A1-A12$, let $(F^*,a^*)$ be a solution of \eqref{rprob}. Then $F^* \in AC[0,T]$.
\end{lem}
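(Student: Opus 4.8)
The plan is to upgrade the continuity of $F^*$ obtained in Lemma~\ref{propcont} to absolute continuity, by showing that away from the finite exceptional set $\cZ$ the function $F^*$ is a classical $C^1$ solution of the equation that defines it implicitly, with derivative exactly the function $\eta$ of Assumption~$A12$, and then patching across the finitely many points of $\cZ$.

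First I would write $G(t,x)=\E[J(t,Z(t),x)]$; by $A3$--$A4$ one may differentiate under the expectation, so $\partial_x G(t,x)=\E[\partial_x J(t,Z(t),x)]$, and by Lemma~\ref{propcont} (taking $F^*$ to be the continuous modification) one has $\partial_x G(t,F^*(t))=0$ for every $t\in[0,T]$, the zero being unique for each $t$ by $A7$. Fix $t_0\in(0,T)\setminus\cZ$. Since $\cZ$ is finite and $\cN\subseteq\cZ\times\R$ by the definition of $\cZ$ in $A11$, there is an open interval $U\ni t_0$ with $(U\times\R)\cap\cN=\emptyset$, so that $G\in C^2(U\times\R)$ with $\partial_{xx}G>0$ there, by $A11$. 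Applying the implicit function theorem to $\partial_x G$ at $(t_0,F^*(t_0))$ gives a $C^1$ function $\phi$ near $t_0$ with $\phi(t_0)=F^*(t_0)$ and $\partial_x G(t,\phi(t))=0$; uniqueness of the zero forces $\phi\equiv F^*$ near $t_0$. Hence $F^*\in C^1$ near $t_0$ and, differentiating $\partial_x G(t,F^*(t))=0$,
\begin{equation*}
(F^*)'(t)=-\frac{\partial_{xt}G(t,F^*(t))}{\partial_{xx}G(t,F^*(t))}=\eta(t),
\end{equation*}
where the last equality uses that the function $F$ of $A12$ agrees with $F^*$ on $[0,T]\setminus\cZ$ by uniqueness of the zero. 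Thus $F^*\in C^1((0,T)\setminus\cZ)$ with $(F^*)'=\eta$ there.

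Since $\cZ$ is finite, $[0,T]\setminus\cZ$ is a finite union of (relatively open) intervals with endpoints $0=s_0<s_1<\cdots<s_k=T$. On each $(s_{i-1},s_i)$, $F^*$ is $C^1$ with derivative $\eta$, so $F^*(t)-F^*(c)=\int_c^t\eta(s)\,ds$ for $c,t\in(s_{i-1},s_i)$; letting $c\to s_{i-1}^+$ and $t\to s_i^-$ and using both the continuity of $F^*$ on $[0,T]$ from Lemma~\ref{propcont} and the absolute continuity of the integral of $\eta\in L^1(0,T)$ (Assumption~$A12$), this identity extends to $F^*(t)=F^*(s_{i-1})+\int_{s_{i-1}}^t\eta(s)\,ds$ for all $t\in[s_{i-1},s_i]$. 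Summing over $i$ yields $F^*(t)=F^*(0)+\int_0^t\eta(s)\,ds$ for every $t\in[0,T]$, so $F^*$ is the sum of a constant and the indefinite integral of an $L^1$ function; therefore $F^*\in AC[0,T]$.

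The main obstacle is the local step: one must set up the implicit function theorem on a genuine $C^2$ neighbourhood of each good time (which is where $A11$, together with the finiteness of $\cZ$ that makes $\cN\subseteq\cZ\times\R$ harmless, enters), identify the resulting branch with $F^*$ through the strict convexity in $A7$, and recognise that the classically computed derivative is precisely the $\eta$ of $A12$. Once this is in place, the gluing across the finitely many points of $\cZ$ is routine, since Lemma~\ref{propcont} controls the values of $F^*$ at those points and $\eta\in L^1(0,T)$ controls the increments.
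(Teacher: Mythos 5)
Your proof is correct and follows essentially the same route as the paper's: the implicit function theorem applied away from the finite exceptional set $\cZ$ identifies $(F^*)'$ with the function $\eta\in L^1(0,T)$ of Assumption $A12$, and the representation $F^*(t)=F^*(0)+\int_0^t\eta(s)\,ds$ is then obtained by integrating on each subinterval determined by $\cZ$ and gluing via the continuity of $F^*$ from Lemma \ref{propcont}. Your write-up is merely more explicit on the local step (the $C^2$ neighbourhood from $A11$ and the identification of the implicit branch with $F^*$ via the uniqueness coming from $A7$), which the paper states in one line.
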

\begin{proof}
	By the Implicit Function Theorem (see \cite[Theorem $3.2.1$]{krantz2012implicit}) we know that $\eta(t)$ defined in Equation \eqref{eta} of Assumption $A12$ is actually the derivative of $F^*(t)$ where it is defined. In particular let us denote $\cZ=\{t_1,\dots,t_n\}$, $t_0=0$, $t_{n+1}=T$ and $I_j=(t_{j-1},t_j)$ for $j=1,\dots,n+1$. For any $j=1,\dots,n+1$ and $t \in I_j$ we have $\eta(t)=\der{F^*}{t}(t)$ and, being $\eta$ continuous in such interval, $F^* \in C^1(I_j)$ for any $j=1,\dots,n+1$. Fix a $j \in \{1,\dots,n+1\}$. Let us observe that for any $\varepsilon>0$
	\begin{equation*}
	\int_{t_{j-1}+\varepsilon}^{t_j-\varepsilon}\eta(s)ds=F^*(t_j-\varepsilon)-F^*(t_{j-1}+\varepsilon)
	\end{equation*}
	thus, by the dominated convergence theorem (being $\eta$ in $L^1$) and by the continuity of $F$ we have, by taking $\varepsilon \to 0$,
	\begin{equation}\label{case1}
	\int_{t_{j-1}}^{t_j}\eta(s)ds=F^*(t_j)-F^*(t_{j-1}).
	\end{equation}
	Moreover, if we consider $t \in I_j$ we can show in the same way that
	\begin{equation}\label{case2}
	\int_{t_{j-1}}^{t}\eta(s)ds=F^*(t)-F^*(t_{j-1}).
	\end{equation}
	Now let us consider $t \in [0,T]$. If $t=t_j$ for some $j=\{1,\dots,n+1\}$ we have, by Equation \eqref{case1},
	\begin{equation*}
	\int_0^{t_j}\eta(s)ds=\sum_{k=1}^{j}\int_{t_{k-1}}^{t_k}\eta(s)ds=\sum_{k=1}^{j}F^*(t_k)-F^*(t_{k-1})=F^*(t_j)-F^*(0).
	\end{equation*}
	Otherwise there exists $j \in\{1,\dots,n+1\}$ such that $t \in I_j$ and we have, by both Equations \eqref{case1} and \eqref{case2},
	\begin{multline*}
	\int_0^{t}\eta(s)ds=\sum_{k=1}^{j-1}\int_{t_{k-1}}^{t_k}\eta(s)ds+\int_{t_{j-1}}^{t}\eta(s)ds=\\=\sum_{k=1}^{j-1}F^*(t_k)-F^*(t_{k-1})+F^*(t)-F^*(t_{j-1})=F^*(t)-F^*(0).
	\end{multline*}
	Thus, for any $t \in [0,T]$, we have
	\begin{equation*}
	F^*(t)=F^*(0)+\int_0^t\eta(s)ds
	\end{equation*}
	concluding the proof.
\end{proof}
\begin{rmk}\label{RemBM} Let us stress that Assumption $A11$ can be lightened by asking instead that $\pdsup{}{x}{2}\E[J(t,Z(t),F^*(t))]>0$ except for a set $\mathcal{Z}$ that is at most finite.
\end{rmk}
Now, for $F^*$ to be in $\cA$, we only need to ask that $F(0)=0$ and $F(T) \in I$, where $I$ is the optimal interval for $\cI_2$. This is done by introducing the Assumptions $A13-A14$.
\begin{lem}
Under Assumptions $A1-A14$, \eqref{prob} admits a unique solution $F \in \cA$.
\end{lem}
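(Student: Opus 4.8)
The plan is to obtain the required minimizer by showing that the solution of the relaxed problem \eqref{rprob} constructed in the previous Lemmas in fact lies in $\cA$, and then to transfer minimality back to the original problem via the natural inclusion $\cA\hookrightarrow\widetilde{\mathcal{A}}_p$. By Lemma \ref{propcont} there is a unique $F^*\in L^p(0,T)$ whose continuous modification satisfies the Euler--Lagrange relation \eqref{eulanr} at \emph{every} $t\in[0,T]$, and by Lemma \ref{propabscont} this $F^*$ belongs to $AC[0,T]$; moreover $(F^*,a^*)$ solves \eqref{rprob} for every $a^*$ in the interval $I=\argmin_{x\in\R}\E[\Phi(Z(T),x)]$. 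The only data missing to place $F^*$ in $\cA$ are the boundary conditions $F^*(0)=0$, together with the compatibility $F^*(T)\in I$ which makes $(F^*,F^*(T))$ itself a solution of the relaxed problem.

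First I would check that $F^*(0)=0$. Evaluating \eqref{eulanr} at $t=0$ (legitimate because $F^*$ is the continuous representative) gives $\E\left[\pd{J}{x}(0,Z(0),F^*(0))\right]=0$, while Assumption $A13$ states $\E\left[\pd{J}{x}(0,Z(0),0)\right]=0$. Since $x\mapsto\E[J(t,Z(t),x)]$ is strictly convex for every $t$ by $A7$, the map $x\mapsto\E\left[\pd{J}{x}(t,Z(t),x)\right]$ is strictly increasing, hence injective, and $F^*(0)=0$ follows. The analogous evaluation at $t=T$ shows that $F^*(T)$ equals the unique solution of $\E\left[\pd{J}{x}(T,Z(T),x)\right]=0$, i.e. the point $a^*$ of Assumption $A14$; that assumption then gives $\E\left[\pd{\Phi}{x}(Z(T),a^*)\right]=0$, and since $x\mapsto\E[\Phi(Z(T),x)]$ is convex ($A10$) and differentiable with derivative obtained by differentiation under the integral sign (justified by $A5$--$A6$), vanishing of this derivative at $a^*$ forces $a^*\in I$. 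Thus $F^*\in AC[0,T]$ with $F^*(0)=0$, so $F^*\in\cA$, and $(F^*,F^*(T))$ is a solution of \eqref{rprob}.

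To conclude existence, observe that $\cA\hookrightarrow\widetilde{\mathcal{A}}_p$ via $F\mapsto(F,F(T))$ (absolutely continuous functions on $[0,T]$ are bounded, hence in $L^p(0,T)$), and that this inclusion intertwines the functionals, $\widetilde{\mathcal{J}}[(F,F(T))]=\mathcal{I}_1[F]+\mathcal{I}_2[F(T)]=\cJ[F]$ for all $F\in\cA$. Therefore $\cJ[F^*]=\widetilde{\mathcal{J}}[(F^*,F^*(T))]=\min_{\widetilde{\mathcal{A}}_p}\widetilde{\mathcal{J}}\le\inf_{\cA}\cJ\le\cJ[F^*]$, so $F^*$ minimizes $\cJ$ over $\cA$. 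For uniqueness, if $G\in\cA$ also minimizes $\cJ$ then $\widetilde{\mathcal{J}}[(G,G(T))]=\cJ[G]=\min_{\widetilde{\mathcal{A}}_p}\widetilde{\mathcal{J}}$, so $(G,G(T))$ solves \eqref{rprob} and the uniqueness part of Lemma \ref{propcont} forces $G=F^*$ in $L^p(0,T)$; being both continuous, $G=F^*$ on all of $[0,T]$. (Alternatively, by Theorem \ref{thmnec} any minimizer in $\cA$ solves \eqref{eulan} pointwise, and injectivity of $x\mapsto\E\left[\pd{J}{x}(t,Z(t),x)\right]$ determines it.) Since $F^*$ satisfies \eqref{eulan} and \eqref{transv} by construction, this also completes the proof of Theorem \ref{thmmain}.

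The main obstacle is the endpoint identification in the second paragraph: one must be sure the Euler--Lagrange relation genuinely holds \emph{up to and including} $t=0$ and $t=T$ — which is precisely why the continuous (indeed absolutely continuous) modification of $F^*$ from Lemmas \ref{propcont}--\ref{propabscont} is essential — and then reconcile the value $F^*(T)$ that \eqref{eulanr} forces with the optimality interval $I$ imposed by the terminal cost. Assumptions $A13$ and $A14$ are exactly the hypotheses making these two pieces of boundary data consistent.
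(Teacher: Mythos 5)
Your proposal is correct and follows essentially the same route as the paper: take the relaxed minimizer, use Lemmas \ref{propcont} and \ref{propabscont} for continuity and absolute continuity, use $A13$ with injectivity of $x\mapsto\E\left[\pd{J}{x}(t,Z(t),x)\right]$ to get $F^*(0)=0$, use $A14$ with convexity of $x\mapsto\E[\Phi(Z(T),x)]$ to place $F^*(T)$ in the optimal interval $I$, and then transfer minimality and uniqueness back to $\cA$. Your explicit sandwich argument for transferring minimality and the endpoint discussion are just slightly more detailed versions of what the paper leaves implicit.
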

\begin{proof}
	Let us consider first the relaxed problem. Thus we have that there exists a unique function $F \in L^p(0,T)$ and an interval $I \subseteq\R$ such that for any $a \in I$ the couple $(F,a) \in \widetilde{\cA}_p$ is solution of \eqref{rprob}.  By Lemma \ref{propcont} we know that $F \in C^0([0,T])$. Moreover, by Lemma \ref{propabscont} we know that $F \in AC([0,T])$. By Assumption $A13$ and uniqueness of the solution of equation \eqref{eqnec} we have that $F(0)=0$, hence $F \in \cA$. Finally, we have that $F(T)$ is the unique solution of \eqref{eulan} for $t=T$ and then, by Assumption $A14$,
	\begin{equation*}
	\E\left[\pd{\Phi}{x}(Z(T),F(T))\right]=0.
	\end{equation*}
	Being the map $x \mapsto \E[\Phi(Z(t),x)]$ convex, the map $x \mapsto \E\left[\pd{\Phi}{x}(Z(T),x)\right]$ is increasing and then $F(T) \in I$. Thus the couple $(F,F(T))$ is solution of the relaxed Problem \eqref{rprob} and then $F$ is solution of Problem \eqref{prob}. Uniqueness follows from the fact that for each $t>0$, Equation \eqref{eulanr} admits a unique solution.
\end{proof}
This last Lemma ends the proof of Theorem \ref{thmmain}. The only thing we have to observe is that by implicit function theorem and Assumptions $A11-A12$, we know that Equation \eqref{eulan} admits a unique solution that, in such case, has to be the minimizer we are looking for.

\subsection{Power cost functionals}\label{Sec.3.7}
Let us give a practical example. We want to solve the approximation problem of Section \ref{Sec.3} for some particular cost functions. By power cost functionals we mean functionals $\cJ_p$ induced by $\Phi$ constant and, for a fixed $p \ge 2$, for any $t \in [0,T]$ and $X^f \in OU(a,\sigma,x_0)$,
\begin{equation}\label{powercost}
J_p(t,|X(t)-X^f(t)|)=|X(t)-X^f(t)|^{p}.
\end{equation}
 We can show the following result that will be useful in the applications.
\begin{prop}\label{prop3.10}
Let us fix $p>2$ and suppose that
\begin{itemize}
	\item[$i$] The process $z \in \cL^p(\Omega,\{\mathcal{F}_t\}_{t\geq 0};L^1(0,T))$;
	\item[$ii$] The function $t \mapsto \E[|z(t)|^{p}]$ belongs to $L^1(0,T)$.
	\item[$iii$] $z(t)>0$ almost surely for any $t \in (0,T]$;
	\item[$iv$] $\pd{}{t}\E[|x-Z(t)|^{p-2}(x-Z(t))]$ is continuous.
\end{itemize}
Fix $\Phi \equiv C$ for some constant $C\ge 0$ and let $J_p$ be as in Equation \eqref{powercost}.
Then the Problem \eqref{prob} with running cost function \eqref{powercost} and constant final cost admits a unique solution $F_p \in \cA$. The same holds if $\Phi_p(X(T)-X^f(T))=|X(T)-X^f(T)|^p$.\\
If $p=2$, then, under hypotheses $i$ and $ii$, for $\Phi \equiv C$ or $\Phi_2(X(T)-X^f(T))=|X(T)-X^f(T)|^2$ and $J_2$ defined as before, there exists a unique $F_2 \in \cA$ solution of \eqref{prob}. 
\end{prop}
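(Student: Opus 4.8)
The plan is to verify that the power cost functionals satisfy Assumptions $A1$--$A14$ — using Remark \ref{RemBM} to weaken $A11$ — and then to invoke Theorem \ref{thmmain}. In the language of Section \ref{Sec.3} the running cost is $J(t,z,x)=|x-z|^p$ and the final cost is either $\Phi\equiv C$ or $\Phi(z,x)=|x-z|^p$, with $Z(t)=\cI z(t)=e^{A(t)}\int_0^te^{-A(s)}z(s)\,ds$. The preliminary step is a family of moment bounds: since $a\in L^1(0,T)$, $A$ is bounded on $[0,T]$, so $|Z(t)|\le c\Norm{z}{L^1(0,T)}$ and, by Jensen's inequality, $\E[|Z(t)|^q]\le c\int_0^t\E[|z(s)|^q]\,ds$ for every $1\le q\le p$; by hypotheses $i$, $ii$ and interpolation these are finite, bounded in $t$ and in $L^1(0,T)$, and $\E[\sup_{t\le T}|Z(t)|^q]<+\infty$.

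Most assumptions are then routine. $A1$ follows with $F\equiv0$; $A2$ is clear; $A3$ and $A5$ hold because $x\mapsto|x-z|^p$ is $C^1$ for $p>1$ with $\pd{}{x}|x-z|^p=p|x-z|^{p-2}(x-z)$; $A4$ and $A6$ follow from $p|x-z|^{p-1}\le c_K(1+|z|^{p-1})$ for $x$ in a compact $K$, which is integrable against $\mu$ (resp. $\mu_T$) by the moment bounds; $A7$ holds since $x\mapsto|x-z|^p$ is strictly convex for $p>1$ (the isolated vanishing of its second derivative at $x=z$ when $p>2$ is harmless), hence so is $x\mapsto\E[|x-Z(t)|^p]$, which diverges with the correct monotonicity at $\pm\infty$; $A8$ is the bound $|x-z|^p\ge 2^{1-p}|x|^p-|z|^p$, giving $\E[J(t,Z(t),x)]\ge 2^{1-p}|x|^p-\E[|Z(t)|^p]$, i.e. $A8$ with $\alpha=2^{1-p}$, any $M>0$ and $h(t)=-2^{p-1}\E[|Z(t)|^p]\in L^1(0,T)$; $A9$--$A10$ hold because $x\mapsto\E[|x-Z(T)|^p]$ is convex and proper (and both are trivial when $\Phi\equiv C$). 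Finally $A13$ is automatic since $Z(0)=0$ gives $\pd{J}{x}(0,Z(0),0)=0$, and $A14$ is automatic too: for $\Phi\equiv C$ one has $\pd{\Phi}{x}\equiv0$, while for $\Phi(z,x)=|x-z|^p$ the equation defining $a^*$ is exactly \eqref{eulan} at $t=T$, so $\E[\pd{\Phi}{x}(Z(T),a^*)]=0$.

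The real work — and the step I expect to be the main obstacle — is $A11$--$A12$. After differentiating under the expectation (legitimate by dominated convergence and the moment bounds) one gets $\pdsup{}{x}{2}\E[J(t,Z(t),x)]=p(p-1)\E[|x-Z(t)|^{p-2}]$ and $\pdmis{}{x}{t}\E[J(t,Z(t),x)]=p\,\pd{}{t}\E[|x-Z(t)|^{p-2}(x-Z(t))]$, the latter continuous precisely by hypothesis $iv$ (and continuity of the former in $(t,x)$ following from a.s. path continuity of $Z$ and the moment bounds). By the lightened form of $A11$ it suffices that $p(p-1)\E[|F^*(t)-Z(t)|^{p-2}]>0$ off a finite set of times: hypothesis $iii$ enters here, since $z(t)>0$ a.s. forces $Z(t)>0$ a.s. on $(0,T]$, whence the nondecreasing map $x\mapsto\E[|x-Z(t)|^{p-2}(x-Z(t))]$ is strictly negative at $x=0$ and its Euler--Lagrange zero $F^*(t)$ is strictly positive on $(0,T]$, so $F^*(t)-Z(t)$ is non-degenerate there and the only exceptional time is $t=0$ (where $F^*(0)=Z(0)=0$). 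One then checks that $\eta$ of \eqref{eta} is continuous on $(0,T]$ off this finite set and — the genuinely delicate estimate, controlled near $t=0$ by the decay of $\E[|Z(t)|^{p-2}]$ as $t\to0$ together with hypothesis $iv$ — belongs to $L^1(0,T)$, which is $A12$; Lemma \ref{propabscont} then upgrades $F^*$ to $AC[0,T]$ and Theorem \ref{thmmain} yields the unique $F_p\in\cA$.

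For $p=2$ the argument is much shorter and uses only $i$ and $ii$: since $\pd{J}{x}(t,z,x)=2(x-z)$, equation \eqref{eulan} reads $F^*(t)=\E[Z(t)]$, which by \eqref{meanZ} equals $e^{A(t)}\int_0^t\E[z(s)]e^{-A(s)}\,ds$, with $\E[z(\cdot)]\in L^1(0,T)$ by Cauchy--Schwarz from hypothesis $ii$; hence $F^*\in AC[0,T]$ and $\der{F^*}{t}(t)=a(t)\E[Z(t)]+\E[z(t)]$, which lies in $L^1(0,T)$ and equals $\eta(t)$ since $\pdsup{}{x}{2}\E[J]\equiv2>0$. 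Thus $A11$--$A12$ hold without $iii$ or $iv$, $A13$ holds because $\E[Z(0)]=0$, $A14$ is as above, and Theorem \ref{thmmain} gives the unique $F_2\in\cA$.
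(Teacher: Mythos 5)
Your proposal is correct and follows essentially the same route as the paper: it verifies $A1$--$A14$ for $J_p$ (invoking Remark \ref{RemBM} to lighten $A11$), with the same Jensen moment bounds, the same inequality $|x-z|^p\ge 2^{1-p}|x|^p-|z|^p$ for $A8$, hypothesis $iii$ to force positivity and hence $\E[|F_p(t)-Z(t)|^{p-2}]>0$ on $(0,T]$, hypothesis $iv$ for $A12$, and the explicit solution $F_2=\E[Z]$ for $p=2$. The only notable difference is one of detail rather than substance: the paper justifies differentiating twice under the expectation by an explicit domination argument split into the cases $p\ge 3$, $2<p<3$ with $xz\ge 0$, and $2<p<3$ with $xz<0$, where you appeal to dominated convergence, and it derives $\E[|F_p(t)-Z(t)|^{p-2}]>0$ from the chain $0<\max_{t}F_p(t)\,\E[|F_p(t)-Z(t)|^{p-2}]$ rather than from your positivity of $F^*(t)$ on $(0,T]$ --- the same idea at the same level of rigor.
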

\begin{proof}
Without loss of generality, let us always suppose that $C=0$. As function of $(t,z,x)\in R_T \times \R$ we have, by Equation \eqref{powercost},
\begin{equation*}
J_p(t,z,x)=|z-x|^{p}.
\end{equation*}
Let us also denote the respective functional as $\cJ_p$. By definition of $Z(t)$ in Equation \eqref{Equ.2.3} and Jensen's inequality we have
\begin{equation*}
\E[|Z(t)|^p]\le e^{pA(t)}\int_0^t\E[|z(t)|^p]e^{-pA(s)}ds
\end{equation*}
and in particular, since the right-hand side is continuous, we have that $\E[|Z(t)|^p]\in L^1(0,T)$. 
Let us now check the hypotheses of Theorem \ref{thmmain}.
\begin{itemize}
	\item[A1] This is verified for $F\equiv 0$, since $\cJ_p[F]=\int_0^T \E[|Z(t)|^p]dt<+\infty$;
	\item[A2] This hypothesis is verified by definition of $J_p$ and $\Phi$;
	\item[A3] For any fixed $(t,z)\in R_T$ the map $x \mapsto J_p(t,z,x)$ belongs to $C^1$ and
	\begin{equation*}
	\pd{J_p}{x}(t,z,x)=p|x-z|^{p-2}(x-z);
	\end{equation*}
	\item[A4] Let $x \in [-K,K]$ and observe that
	\begin{equation*}
	\left|\pd{J_p}{x}(t,z,x)\right|=p|x-z|^{p-1}\le p 2^{p-2}(K^{p-1}+|z|^{p-1})=:\Psi_K(z).
	\end{equation*}
	Since $\E[|Z(t)|^{p-1}]$ is well defined and belongs to $L^1(0,T)$ (by H\"older's inequality), we have that $\Psi_K(z)\in L^1(R_T;\mu)$.
	\item[A5,A6] These are obvious since $\Phi \equiv 0$;
	\item[A7] The map $x \mapsto \E[J(t,Z(t),x)]$ is strictly convex, decreasing as $x \to -\infty$ and increasing as $x \to +\infty$ since the function $J(t,z,x)$ has these properties;
	\item[A8] Observe that we have
	\begin{equation*}
	|x|^p=|x-z+z|^p\le 2^{p-1}(|x-z|^p+|z|^p)
	\end{equation*}
	hence
	\begin{equation*}
	|x-z|^p\ge 2^{1-p}|x|^p-|z|^p.
	\end{equation*}
	Thus we can conclude that
	\begin{equation*}
	\E[J_p(t,Z(t),x)]\ge 2^{1-p}(|x|^p-2^{p-1}\E[|Z(t)|^p]),
	\end{equation*}
	where $2^{p-1}\E[|Z(t)|^p]$ belongs to $L^1(0,T)$;
	\item[A9,A10] These hypotheses are obviously satisfied by $\Phi \equiv 0$;
	\item[A11] Let us observe that
	\begin{equation*}
	\pdsup{}{x}{2}J_p(t,z,x)=p(p-1)|x-z|^{p-2}.
	\end{equation*}
	Let us fix $x_0 \in \R$, $\delta>0$ and distinguish three cases. If $p\ge 3$ then $p-2>1$ and we have
	\begin{multline*}
	p(p-1)|x-z|^{p-2}\le p(p-1)2^{p-2}(|x|^{p-2}+|z|^{p-2})\le \\
	\le p(p-1)2^{p-2}(\max\{|x_0-\delta|^{p-2},|x_0+\delta|^{p-2}\}+|z|^{p-2})\\
	\quad \forall x \in [x_0-\delta,x_0+\delta].
	\end{multline*}
	If $2<p<3$ and $xz \ge 0$ we have (since the function $|x|^{p-2}$ is concave if restricted to $(-\infty,0]$ or $[0,+\infty)$)
	\begin{equation*}
	|z|^{p-2}=|z-x+x|^{p-2}\ge 2^{p-3}(|x-z|^{p-2}+|x|^{p-2})
	\end{equation*}
	thus in this case
	\begin{equation*}
	p(p-1)|x-z|^{p-2}\le p(p-1)(2^{3-p}|z|^{p-2}-|x|^{p-2})\le p(p-1)2^{3-p}|z|^{p-2}.
	\end{equation*}
	If $2<p<3$ and $xz<0$ then we can suppose $x>0$ and $z<0$. In this case
	\begin{align*}
	p&(p-1)|x-z|^{p-2}=p(p-1)|x+|z||^{p-2}\\&\le p(p-1)(|x|+|z|)^{p-2}\\&\le 2^{p-2}p(p-1)(\max\{|x|,|z|\})^{p-2}\\
	&\le 2^{p-2}p(p-1)(\max\{|x_0-\delta|,|x_0+\delta|,|z|\})^{p-2} \quad \forall x \in [x_0-\delta,x_0+\delta].
	\end{align*}
	The same holds for $x<0$ and $z>0$. Finally, for $p=2$ we have $p(p-1)|x-z|^{p-2}=2$.
	By using these estimates and the estimate in hypothesis $A4$, we can differentiate under the integral sign, obtaining
	\begin{equation*}
	\pdsup{}{x}{2}\E[J_p(t,Z(t),x)]=p(p-1)\E[|x-Z(t)|^{p-2}]\ge 0.
	\end{equation*}
	Let us observe that, by Remark \eqref{RemBM}, we actually need to show that $\E[|F_p(t)-Z(t)|^{p-2}]=0$ at most in a finite set, where $F_p(t)$ is the unique solution of
	\begin{equation}\label{eulanpower}
	\E\left[|x-Z(t)|^{p-2}(x-Z(t))\right]=0 \qquad t \in (0,T),
	\end{equation}
that is Equation \eqref{eulan} in this case.
Let us first consider the case in which $p>2$. Since, by $iii$, $z(t)>0$ for any $t \in [0,T]$ almost surely, $Z(t)>0$ for any $t$ in $(0,T]$ almost surely. In such case, $F_p(t)$ cannot be negative for all $t \in [0,T]$ thus $\max_{t \in [0,T]}F_p(t)>0$. Then we have, recalling Equation \eqref{eulanpower},
	\begin{align*}
	0=\E[|F_p(t)-Z(t)|^{p-2}&(F_p(t)-Z(t))]<\E\left[|F_p(t)-Z(t)|^{p-2}F_p(t)\right]\\&<\max_{t \in [0,T]}F_p(t)\E\left[|F_p(t)-Z(t)|^{p-2}\right].
	\end{align*}
	For $p=2$ we have instead
	\begin{equation*}
	\pdsup{}{x}{2}\E[J_2(t,Z(t),x)]=2>0,
	\end{equation*}
	without using $iii$.
	\item[A12] For $p>2$, let us observe that
	\begin{equation*}
	\pdmis{}{x}{t}\E[J_p(t,Z(t),x)]=p\pd{}{t}\E[|x-Z(t)|^{p-2}(x-Z(t))].
	\end{equation*}
	Since, by $iv$, we have that $\pd{}{t}\E[|x-Z(t)|^{p-2}(x-Z(t))]$ is continuous in $t \in [0,T]$, we can observe that the function
	\begin{equation*}
	\eta_p(t)=\frac{\pd{}{t}\E[|F_p(t)-Z(t)|^{p-2}(Z(t)-F_p(t))]}{(p-1)\E[|F_p(t)-Z(t)|^{p-2}]}
	\end{equation*}
	is continuous in $[0,T]$, thus is in $L^1(0,T)$.\\
The case $p=2$ is simpler. We have that Equation \eqref{eulanpower} becomes
\begin{equation*}
\E[Z(t)-F_2(t)]=0
\end{equation*}
thus we know that 
\begin{equation}\label{F2}
F_2(t)=\E[Z(t)].
\end{equation} 
By Fubini's theorem we know that
\begin{equation*}
\E[Z(t)]=e^{A(t)}\int_{0}^{t}\E[z(s)]e^{-A(s)}ds.
\end{equation*}
Thus we have, since $F_2(t)=e^{A(t)}\int_0^{t}f_2(s)e^{-A(s)}ds$,
\begin{equation}\label{f2}
f_2(s)=\E[z(s)]
\end{equation}
which is uniquely defined since the map $\cI$ is a bijection. Finally, since we have 
\begin{equation*}
\der{}{t}\E[Z(t)]=a(t)\E[Z(t)]+\E[z(t)]
\end{equation*}
and then
\begin{equation*}
\pdmis{}{x}{t}\E[(x-Z(t))^2]=-2(a(t)\E[Z(t)]+\E[z(t)]),
\end{equation*}
we get
\begin{equation*}
\eta(t)=a(t)\E[Z(t)]+\E[z(t)]
\end{equation*}
that is continuous in $[0,T]$ and then in $L^1(0,T)$.
	\item[A13] Since $Z(0)=0$ we have
	\begin{equation*}
	\E\left[\pd{J_p}{x}(0,0,0)\right]=0.
	\end{equation*}
	\item[A14] This hypothesis is obviously satisfied since $\Phi \equiv 0$.	
\end{itemize}
The proof of Assumptions $A4$,$A5$ for $\Phi_p$ are analogous to the ones for $J_p$, while $A9$, $A10$ and $A14$ follow from the structure of $\Phi_p$. 
\end{proof}
The previous example shows that our result includes, as a special case, the well-known fact that the expected value minimizes the mean square error.
Moreover, in this case it is easy to obtain a bound on the minimum in terms of the variance of $z(t)$. Indeed we have
\begin{equation*}
\cJ_2[\E[Z(t)]]=\E\left[\int_{0}^{T}(Z(t)-\E[Z(t)])^2dt\right]=\int_0^{T}\D[Z(t)]dt.
\end{equation*}
However we have by Jensen's inequality
\begin{equation}\label{d2def}
\D[Z(t)]\le e^{2A(t)}\int_0^t\D[z(s)]e^{-2A(s)}ds=:d_2(t)
\end{equation}
thus we have
\begin{equation*}
\cJ_2[\E[Z(t)]]\le \int_0^{T}d_2(t)dt.
\end{equation*}
In particular, if $d_2 \in L^1(0,+\infty)$, we have that the minimum approximation error $\cJ_2[\E[Z(t)]]$ is bounded for $T \to +\infty$.\\
One can obtain also a point-wise estimate on the distance between the processes, given by
\begin{equation}\label{pointdist}
\E[|X(t)-X^f(t)|^2]\le d_2(t).
\end{equation}
Let us recall that, despite Problem \eqref{prob} admits a unique solution, the relaxed Problem \eqref{rprob} could still admit more than a solution. For instance, if we consider the functional $\mathcal{J}_2$ induced by $J_2$ and $\Phi \equiv 0$, Problem \eqref{prob} admits $F_2(t)=\E[Z(t)]$ as unique solution, while Problem \eqref{rprob} admits $(F_2(t),a)$ as solution for any $a \in \R$. On the other hand, if $\Phi$ is strictly convex, then also the relaxed Problem \eqref{rprob} admits a unique solution. This is the case of the functional $\mathcal{J}_{2,2}$ induced by $J_2$ and $\Phi_2(x,z)=|x-z|^2$. Indeed Problem \eqref{prob} admits $F_2(t)=\E[Z(t)]$ as unique solution and Problem \eqref{rprob} admits $(F_2(t),F_2(T))$ as unique solution. Finally, let us observe that if Assumption $A14$ is not satisfied, then the Problem \eqref{rprob} could admit a solution while \eqref{prob} could not. Indeed, if we consider the funcitonal $\mathcal{J}_{2,3}$ induced by $J_2$ and $\Phi_3(x,z)=|x-z|^3$, Problem \eqref{rprob} admits as unique solution $(F_2(t),F_3(T))$, while Problem \eqref{prob} admits a unique solution if and only if $F_2(T)=F_3(T)$, otherwise there are no absolutely continuous functions that satisfies simultaneously Equations \eqref{eulan} and \eqref{transv}.

\section{Examples}\label{Sec.4}

In order to provide some examples of application of Theorem \ref{thmmain}, in particular of Proposition \ref{prop3.10}, let us consider Eq. \eqref{Equ.2.1} with $a(t)\equiv-\theta$ for some $\theta>0$.
Let us study the optimal approximation for some particular choices of $z(t)$. In particular we will denote with $X(t)$ the original process and with $X_p(t)$ the optimal Gauss-Markov approximation with respect to the power cost functional $\cJ_p$. In the examples we will consider the approximations $X_2(t)$ and $X_4(t)$. Let us recall that, by Proposition \ref{prop3.10}, $F_2(t)=\E[Z(t)]$ while Equation \eqref{eulanpower} for $p=4$ becomes
\begin{equation}\label{eulanpower4}
F_4(t)^3-3F_4(t)^2\E[Z(t)]+3F_4(t)\E[Z(t)^2]-\E[Z(t)^3]=0.
\end{equation}
Let us also recall that the approximating process $X_p(t)$ solves the SDE
\begin{equation}\label{SDEapproxp}
dX_p(t)=[a(t)X_p(t)+f_p(t)]dt+\sigma dW(t), \quad X_p(0)=x_0
\end{equation}
where 
\begin{equation}\label{fpcalc}
f_p(t)=\mathcal{I}^{-1}F_p(t)=\eta_p(t)-a(t)F_p(t).
\end{equation}
$F_4(t)$ is, by Theorem \ref{thmmain} and Proposition \ref{prop3.10}, the unique zero of Equation \eqref{eulanpower4}. Thus we can evaluate it numerically by using bisection method. We will not have explicit expression of $X_4(t)$ in the following examples.
\subsection{A single shot as a drift.}\label{Sec.4.1}
Let 
\begin{equation*}
z(t)=\begin{cases}
0 & t<\cT\\
1 & t \ge \cT
\end{cases}
\end{equation*}
where $\cT \sim Exp(\lambda)$ with $\lambda \not = \theta,2\theta$. To ensure that $z(t)$ is adapted to the filtration $\{\cF_t\}_{t \ge 0}$ we have to ask for $\cT$ to be a Markov time with respect to that filtration. By definition of $Z(t)$ in Equation \eqref{Equ.2.3}, we have
\begin{equation}\label{Zss}
Z(t)=\frac{(1-e^{-\theta(t-\cT)})}{\theta}\chi_{[\cT,+\infty)}(t),
\end{equation}
where $\chi_{[\cT,+\infty)}(t)$ is the indicator function of the (stochastic) interval $[\cT,+\infty)$. Let us in particular observe that $Z(t)$ is a Markov process. Moreover, by using Proposition \ref{prop2.1} (or Equation \eqref{spliteq}) we have
\begin{equation*}
X(t)=e^{-\theta t}x_0+\sigma e^{-\theta t}\int_0^{t}e^{\theta s}dW_s+\frac{(1-e^{-\theta(t-\cT)})}{\theta}\chi_{[\cT,+\infty)}(t).
\end{equation*}
Let us also observe that
\begin{equation*}
f_2(t)=\E[z(t)]=1-e^{-\lambda t},
\end{equation*}
hence we obtain
\begin{equation}\label{F2ss}
F_2(t)=\cI f_2(t)=\frac{1-e^{-\theta t}}{\theta}-\frac{e^{-\lambda t}-e^{-\theta t}}{\theta-\lambda}.
\end{equation}
Finally, $X_2(t)$ is obtained by solving Equation \eqref{SDEapproxp}, thus we have 
\begin{equation*}
X_2(t)=\cS f_2(t)=e^{-\theta t}x_0+\sigma e^{-\theta t}\int_0^{t}e^{\theta s}dW_s+\frac{1-e^{-\theta t}}{\theta}-\frac{e^{-\lambda t}-e^{-\theta t}}{\theta-\lambda}.
\end{equation*}
Observing that
\begin{equation*}
\D[z(t)]=(1-e^{-\lambda t})-(1-e^{-\lambda t})^2
\end{equation*}
we have, by Equation \eqref{d2def},
\begin{equation}\label{ssd2}
d_2(t)=\frac{e^{-\lambda t}((\lambda-2\theta)e^{-\lambda t}+2\theta-2\lambda)+\lambda e^{-2\theta t}}{2(\theta-\lambda)(2\theta-\lambda)}.
\end{equation}
It is not difficult to check that $d_2 \in L^1(0,+\infty)$ thus $\cJ_2[X_2]$ is uniformly bounded by $\Norm{d_2}{L^1(0,+\infty)}$.
To evaluate $X_4(t)$, we need $F_4(t)$, solution of Equation \eqref{eulanpower4}. In Figure \ref{fig:f2f4}, on the left, we plot some sample trajectories of $X(t)$, $X_2(t)$ and $X_4(t)$, with the same realization of the white noise process. In particular to plot $X_4(t)$ we solved numerically Equation \eqref{eulanpower4} to obtain $F_4(t)$. In Figure \ref{fig:f2f4}, on the right, we plot a simulated curve of the function $t \mapsto \E[(X(t)-X_2(t))^2]$ and we compare it to the bound $d_2(t)$ given in Equation \eqref{ssd2}. We observe that as $t$ increases, the simulated error and the bound tend to coincide and go to $0$. The fact that the error should go to $0$ can be also observed from Figure \ref{fig:f2f4} on the left, since as $t$ increases the trajectories of $X_2$ and $X$ overlap. This is due also by the nature of $Z(t)$ in Equation \eqref{Zss}, which goes to $1/\theta$ as $t$ increases and so does its mean, thus leaving the stochastic part only to $Y(t)$ that is common between $X_2$ and $X$.
\begin{figure}[h]
	\centering
	\includegraphics[width=0.49\linewidth]{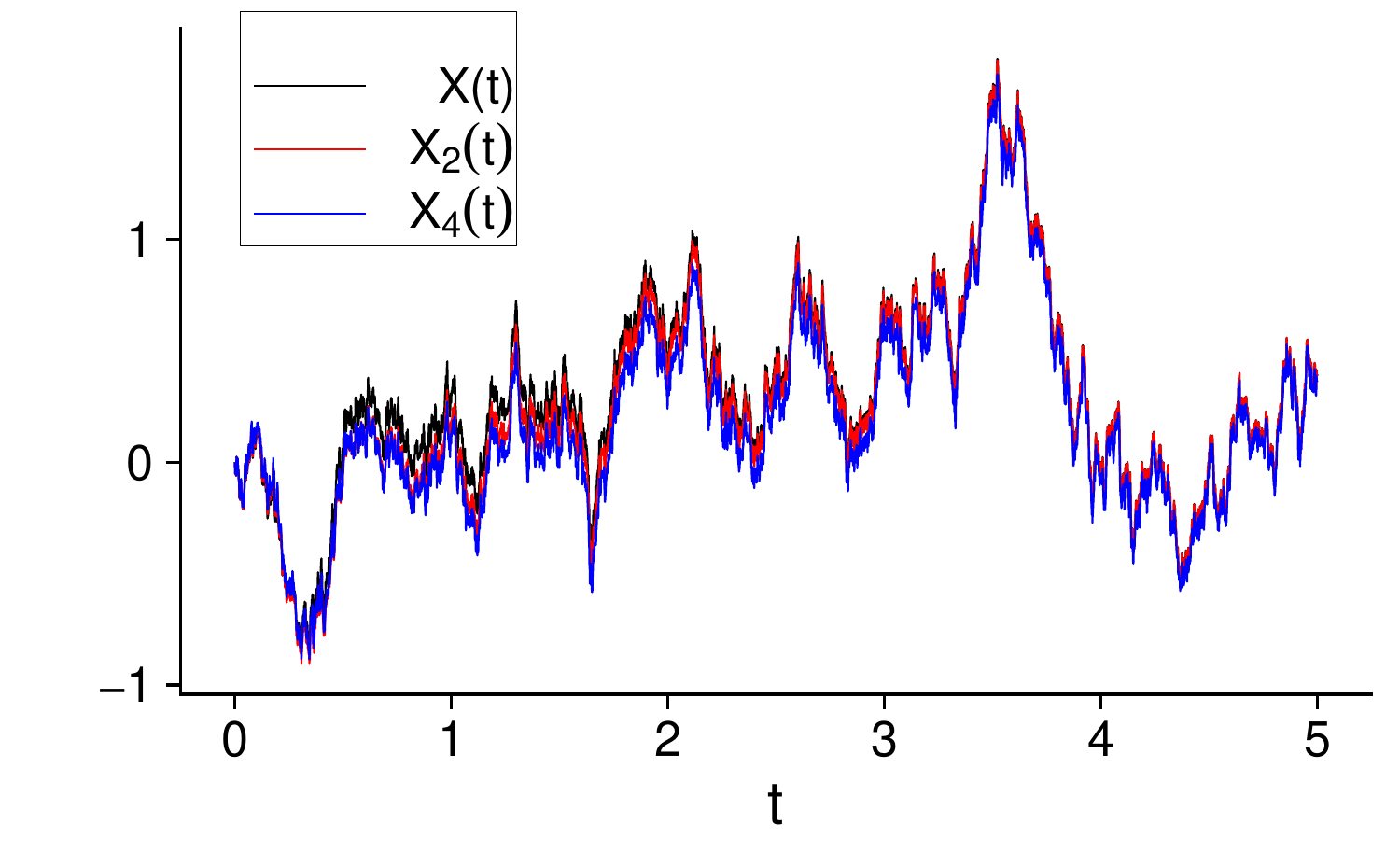}
	\includegraphics[width=0.49\linewidth]{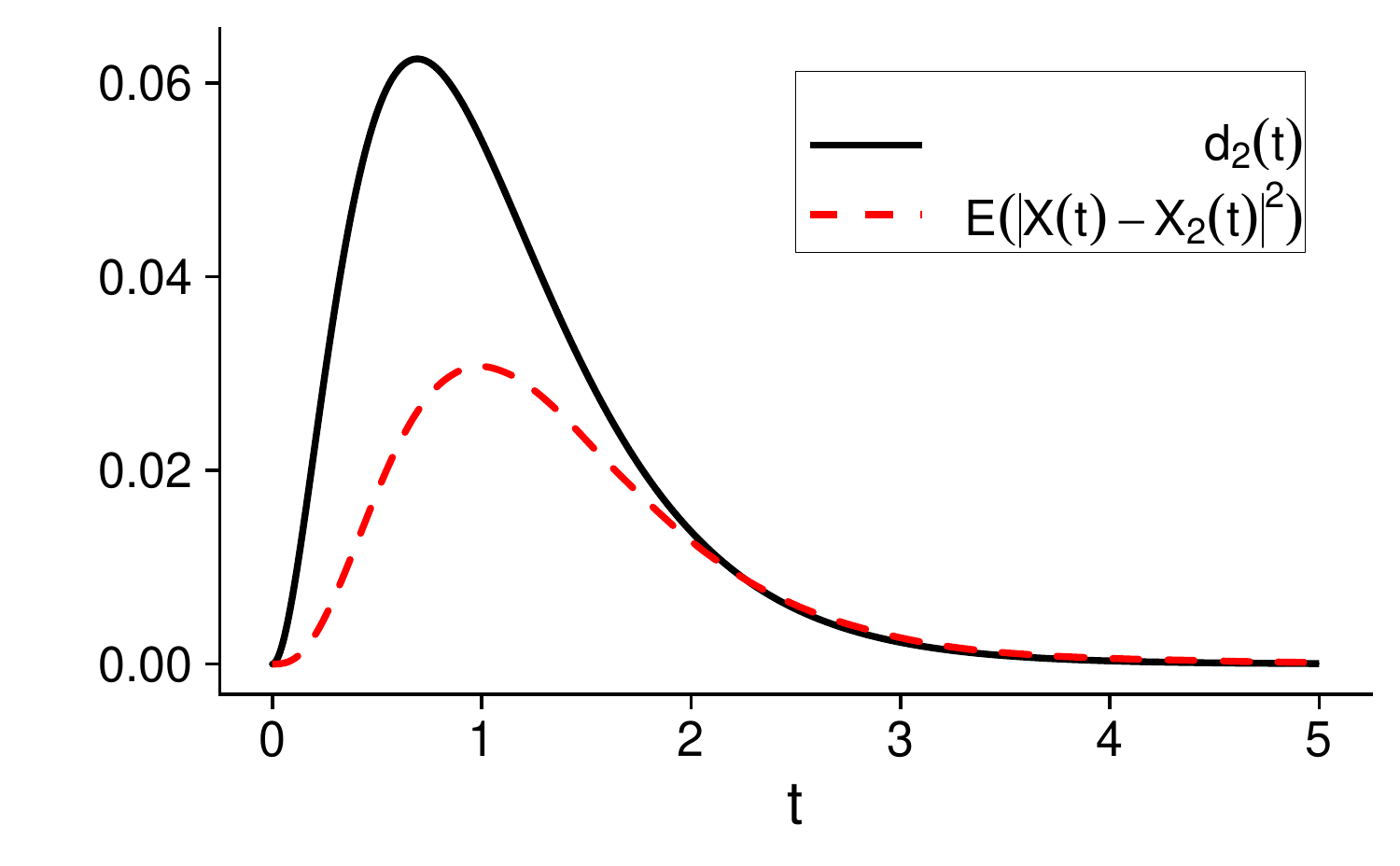}
	\caption{Left: sample paths of $X(t)$, $X_2(t)$ and $X_4(t)$ for the Single shot example. The trajectories are obtained by using Equation \eqref{spliteq} and simulating separately $Y(t)$ (which is common for the three processes), $Z(t)$ (from Equation \eqref{Zss}), $F_2(t)$ (from Equation \eqref{F2ss}) and $F_4(t)$ (solving numerically Equation \eqref{eulanpower4}). The parameters are chosen to be $\theta=1.5$, $\lambda=2$, $\sigma=1$ and $X_0=0$, with discretization time interval $\Delta t=10^{-3}$.\\
	Right: comparison between the simulated pointwise mean square error (of the approximation of $X(t)$ via $X_2(t)$) and its bound $d_2(t)$ from Equation \eqref{ssd2}. The parameters are chosen to be $\theta=1.5$, $\lambda=2$, $\sigma=1$ and $X_0=0$, with discretization time interval $\Delta t=10^{-3}$. To obtain $\E[(X(t)-X_2(t))^2]$, $N=10^4$ trajectories have been simulated.\\ 
	These simulations and the following ones are made by using the software environment \texttt{R} \cite{Rcode}.}
	\label{fig:f2f4}
\end{figure}
\subsection{A Poisson process as drift.}\label{Sec.4.2}
Let $z(t)= N(t)$ be the stochastic drift process with $N(t)$ a Poisson process with parameter $\lambda$ adapted to the filtration $\{\cF_t\}_{t \ge 0}$.
In this case one has $f_2(t)=\E[z(t)]=\mathbb{D}[z(t)]=\lambda t$. By definition of $Z(t)$ in Equation \eqref{Equ.2.3} we have
\begin{equation*}
Z(t)=e^{-\theta t}\int_0^tN(s)e^{\theta s}ds=\sum_{i=1}^{+\infty}\frac{1-e^{-\theta(t-\cT_i)}}{\theta}\chi_{[\cT_i,+\infty)}(t),
\end{equation*}
where last equality follows from $N(t)=\sum_{i=1}^{+\infty}\chi_{[\cT_i,+\infty)}(t)$ with $\cT_i$ the jump times of the process $N(t)$, hence, from Proposition \ref{prop2.1} we have that
\begin{equation*}
X(t)=e^{-\theta t}x_0+\sigma e^{-\theta t}\int_0^{t}e^{\theta s}dW_s+\sum_{i=1}^{+\infty}\frac{1-e^{-\theta(t-\cT_i)}}{\theta}\chi_{[\cT_i,+\infty)}(t).
\end{equation*}
The process $X_2(t)$ is obtained by solving \eqref{SDEapproxp} with $f_2(t)=\lambda t$, obtaining
\begin{equation*}
X_2(t)=e^{-\theta t}x_0+\sigma e^{-\theta t}\int_0^{t}e^{\theta s}dW_s+\frac{\lambda t}{\theta}-\frac{\lambda}{\theta^2}(1-e^{-\theta t}).
\end{equation*}
Concerning the upper bound for the punctual $L^2$ distance, we have, by Equation \eqref{d2def},
\begin{equation*}
d_2(t)=\frac{\lambda t}{2\theta}-\frac{\lambda}{4\theta^2}(1-e^{-2\theta t}).
\end{equation*}
This time, $d_2$ does not belong to $L^1(0,+\infty)$. However, it is not difficult to check that there exists a constant $C_2>0$ such that $\cJ_2[X_2]\le C_2T^2$ for $T$ large enough.
\subsection{A Compound Poisson process as drift.}\label{Sec.4.3}
Let $z(t)=\sum_{i=1}^{N(t)}J_i$ be the stochastic drift process where $N(t)$ is a Poisson process adapted to the filtration $\{\cF_t\}_{t \ge 0}$ with parameter $\lambda>0$ and $\{J_i\}_{i \in \mathbb{N}}$ is a sequence of i.i.d. random variables, distributed as a given variable $J \in L^2(\bP)$, which are also independent from $N(t)$. Let us also suppose that $J_i$ are measurable with respect to $\cF_t$ for any $t>0$ and for any $i \in \N$. By definition of $Z(t)$ in Equation \eqref{Equ.2.3} we have
\begin{equation*}
Z(t)=\sum_{i=1}^{+\infty}J_i\frac{1-e^{-\theta(t-\cT_i)}}{\theta}\chi_{[\cT_i,+\infty)}(t),
\end{equation*}
where $\cT_i$ are the jump times of the process $z(t)$, hence, by Proposition \ref{prop2.1}, we have
\begin{align*}
X(t)=e^{-\theta t}x_0+\sigma e^{-\theta t}\int_0^{t}e^{\theta s}dW_s+\sum_{i=1}^{+\infty}J_i\frac{1-e^{-\theta(t-\cT_i)}}{\theta}\chi_{[\cT_i,+\infty)}(t).
\end{align*}
The process $X_2(t)$ is then obtained by solving Equation \eqref{SDEapproxp} with $f_2(t)=\lambda t \E[J]$:
\begin{equation*}
X_2(t)=e^{-\theta t}x_0+\sigma e^{-\theta t}\int_0^{t}e^{\theta s}dW_s+\frac{\lambda t \E[J]}{\theta}-\frac{\lambda \E[J]}{\theta^2}(1-e^{-\theta t}).
\end{equation*}
Moreover, since $\D[z(t)]=\lambda t \E[J^2]$, we have, by Equation \eqref{d2def},
\begin{equation*}
d_2(t)=\frac{\lambda \E[J]t}{2\theta}-\frac{\lambda\E[J^2]}{4\theta^2}(1-e^{-2\theta t}).
\end{equation*}
As for the case of the Poisson process, also in this case there exists a constant $C_2>0$ such that $\cJ_2[X_2]\le C_2T^2$ for $T$ large enough.
\subsection{A Shot Noise as drift.}\label{Sec.4.4}
Let $M$ be a $L^2(\bP)$ random variable with positive integer values measurable with respect to $\cF_t$ for any $t>0$, $\{\beta_i\}_{i \in \N}$ i.i.d. $L^2(\bP)$ random variables independent of $M$, measurable with respect to $\cF_t$ for any $t>0$, and distributed as $\beta$ and $\{\cT_i\}_{i \in \N}$ i.i.d. almost surely positive absolutely continuous random variables that are Markov times with respect to $\{\cF_t\}_{t \ge 0}$, distributed as a fixed random variable $\cT$ and independent of the $\beta_i$ and $M$. Moreover, let us consider a function $R(t)$ (called response function) such that $R(t)=0$ for any $t<0$. Let us denote by $p_{\cT}(t)$ the probability density function of $\cT$. Let us consider the stochastic process $z(t)=\sum_{i=1}^{M}\beta_iR(t-\cT_i)$ as drift process. 
In this case $Z(t)$, by Equation \eqref{Equ.2.3}, is given by
\begin{equation}\label{ZSN}
Z(t)=e^{-\theta t}\sum_{i=1}^{M}\beta_i \int_0^{t}R(s-\cT_i)e^{\theta s}ds
\end{equation}
and then the process $X(t)$, by Equation \eqref{spliteq},
\begin{equation*}
X(t)=e^{-\theta t}x_0+\sigma e^{-\theta t}\int_0^{t}e^{\theta s}dW_s+Z(t).
\end{equation*}
Now let us recall that
\begin{equation*}
f_2(t)=\E[z(t)]=\E[M]\E[\beta]\varphi(t),
\end{equation*}
where
\begin{equation}\label{varphiformula}
\varphi(t)=\E[R(t-\cT)]=\int_0^{t}R(t-s)p_{\cT}(s)ds=(R \ast p_{\cT})(t).
\end{equation}
Solving Equation \eqref{SDEapproxp} we get
\begin{equation}\label{X2SN}
X_2(t)=e^{-\theta t}x_0+\sigma e^{-\theta t}\int_0^{t}e^{\theta s}dW_s+e^{-\theta t}\E[M]\E[\beta]\int_0^{t}\varphi(s)e^{\theta s}ds.
\end{equation}
Finally, since
\begin{equation*}
\D[z(t)]=\E[\beta]^2\varphi^2(t)(\D[M]-\E[M])+\E[M]\E[\beta^2]\Psi(t),
\end{equation*}
where
\begin{equation}\label{PsiSN}
\Psi(t)=\E[R^2(t-\cT)]=(R^2\ast p_{\cT})(t),
\end{equation}
we obtain, from Equation \eqref{d2def},
\begin{equation}\label{eqd2shot}
d_2(t)=e^{-2\theta t}\E[\beta]^2(\D[M]-\E[M])\int_{0}^t\varphi^2(s)e^{2\theta s}ds+\E[M]\E[\beta^2]e^{-2\theta t}\int_0^t\Psi(s)e^{2\theta s}ds.
\end{equation}
Let us observe that if $M$ is distributed as a Poisson random variable with parameter $\lambda$, then $\D[M]-\E[M]=0$ and we have
\begin{equation*}
d_2(t)=\lambda\E[\beta^2]e^{-2\theta t}\int_0^t\Psi(s)e^{2\theta s}ds.
\end{equation*}
An interesting case is given by $R(t)=e^{-\frac{t}{\tau}}1_{[0,+\infty)}(t)$. In the neuronal modeling context, a process $z(t)$ of this kind goes under the name of {\it shot noise}. It plays a key role in the description of neuronal networks dynamics as described in the next section.
\subsection{A Brownian motion as drift.}\label{Sec.4.5}
Let $z(t)=\widetilde{W}(t)+\lambda t$ where $\widetilde{W}(t)$ is a Brownian motion adapted to $\{\cF_t\}_{t \ge 0}$ and independent of $W(t)$, and $\lambda \ge 0$. By definition of $Z(t)$ in Equation \eqref{Equ.2.3} we have
\begin{equation*}
Z(t)=e^{-\theta t}\int_0^{t}(\widetilde{W}(s)+\lambda s)e^{\theta s}ds
\end{equation*}
that solves the equation
\begin{equation*}
dZ(t)=(-\theta Z(t)+\widetilde{W}(t)+\lambda t)dt, \qquad Z(0)=0.
\end{equation*}
By Proposition \ref{prop2.1} we have
\begin{equation*}
X(t)=e^{-\theta t}x_0+\sigma e^{-\theta t}\int_0^{t}e^{\theta s}dW_s+\frac{\lambda t}{\theta}-\frac{\lambda}{\theta^2}(1-e^{-\theta t})+e^{-\theta t}\int_0^{t}\widetilde{W}(s)e^{\theta s}ds.
\end{equation*}
Since $f_2(t)=\E[z(t)]=\lambda t$, solving \eqref{SDEapproxp}, we have
\begin{equation*}
X_2(t)=e^{-\theta t}x_0+\sigma e^{-\theta t}\int_0^{t}e^{\theta s}dW_s+\frac{\lambda t}{\theta}-\frac{\lambda}{\theta^2}(1-e^{-\theta t}).
\end{equation*}
Note that $X_2$ is the same as the approximant we obtain in the Poisson case. However, since $\D[z(t)]=t$, we have, by Equation \eqref{d2def},
\begin{equation*}
d_2(t)=\frac{t}{2\theta}-\frac{1-e^{-2\theta t}}{4\theta^2}
\end{equation*}
which is independent of $\lambda>0$. Thus if $\lambda>1$, the upper bound given by $d_2(t)$ is stricter than the one in the Poisson process case; vice-versa if $\lambda<1$.\\
Concerning $X_4$, in general, if hypothesis $iii$ of Proposition \ref{prop3.10} does not hold, one could check if Equation \eqref{eulanpower4} admits a triple zero only for an at most finite set of $t \in [0,T]$. However, the case of a Gaussian drift term is particular since we can show the following Proposition:
\begin{prop}\label{propGauss}
	If $z(t)$ is a Riemann-integrable Gaussian process and hypotheses $i$ and $ii$ of Proposition \ref{prop3.10} for $p=2n$ for some $n \in \N$ hold, then Problem \eqref{prob} with running cost $J_{2n}$ and constant final cost, or final cost $\Phi_{2n}$, admits a unique solution $F_{2n}=F_2$.
\end{prop}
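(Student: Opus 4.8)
The plan is to derive the statement from Theorem \ref{thmmain} by checking that the running cost $J_{2n}$, paired either with a constant final cost or with $\Phi_{2n}$, satisfies Assumptions $A1$--$A14$, and then identifying the unique minimiser it produces --- the unique solution of the Euler--Lagrange equation \eqref{eulan} and the transversality condition \eqref{transv} --- with $F_2$. Since $z$ is Riemann-integrable and Gaussian, $Z(t)=e^{A(t)}\int_0^t z(s)e^{-A(s)}ds$ is, for each fixed $t$, a Gaussian random variable; set $m(t):=\E[Z(t)]$ and $v(t):=\D[Z(t)]$. Hypotheses $i$ and $ii$ for $p=2n$ yield $\E[z(\cdot)]\in L^1(0,T)$ and $\E[|Z(\cdot)|^{2n}]\in L^1(0,T)$ (Jensen plus H\"older on a bounded interval, as in the proof of Proposition \ref{prop3.10}), so $F_2=\cI(\E[z(\cdot)])$ is a well-defined element of $\cA$, and Assumptions $A1$--$A10$ and $A13$ are verified for $J_{2n}$ verbatim as in the proof of Proposition \ref{prop3.10} with $p=2n$ (for the choice $\Phi_{2n}$, Assumptions $A5$, $A6$, $A9$, $A10$ use the even-power structure of $\Phi_{2n}$ exactly as for $J_{2n}$).

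The core of the argument replaces hypotheses $iii$ and $iv$ of Proposition \ref{prop3.10} by Gaussianity. Because $\pd{J_{2n}}{x}(t,z,x)=2n(x-z)^{2n-1}$, the Euler--Lagrange equation \eqref{eulan} at time $t$ is $\E[(x-Z(t))^{2n-1}]=0$; writing $x-Z(t)=(x-m(t))-(Z(t)-m(t))$ with $Z(t)-m(t)\sim N(0,v(t))$, expanding by the binomial theorem and dropping the vanishing odd central moments gives
\begin{equation*}
\E[(x-Z(t))^{2n-1}]=(x-m(t))\sum_{j=0}^{n-1}\binom{2n-1}{2j}(2j-1)!!\,v(t)^{j}(x-m(t))^{2(n-1-j)}.
\end{equation*}
The sum is a polynomial in $(x-m(t))^2$ and $v(t)$ with non-negative coefficients, bounded below by $(x-m(t))^{2(n-1)}$ and by $(2n-1)(2n-3)!!\,v(t)^{n-1}$; it is therefore strictly positive for every $x$ when $v(t)>0$ and equals $(x-m(t))^{2(n-1)}$ when $v(t)=0$, so in all cases $x=m(t)$ is the unique root. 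Hence the unique solution of \eqref{eulan} is $F_{2n}(t)=m(t)=\E[Z(t)]=F_2(t)$, and the same computation at $t=T$ gives $\E[\pd{\Phi_{2n}}{x}(Z(T),m(T))]=2n\,\E[(m(T)-Z(T))^{2n-1}]=0$, which is Assumption $A14$ for the choice $\Phi_{2n}$ (for constant $\Phi$, $A14$ is trivial).

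It remains to verify $A11$ and $A12$. Along $F^*=m$ one has $\pdsup{}{x}{2}\E[J_{2n}(t,Z(t),m(t))]=2n(2n-1)(2n-3)!!\,v(t)^{n-1}$, which is strictly positive whenever $v(t)>0$, so the lightened form of $A11$ from Remark \ref{RemBM} holds; and since $F^*=m$ is absolutely continuous with $\der{}{t}m(t)=a(t)m(t)+\E[z(t)]$, the function $\eta$ of \eqref{eta} coincides a.e. with this derivative and hence lies in $L^1(0,T)$, which is $A12$. Theorem \ref{thmmain} then provides a unique $F_{2n}\in\cA$ solving \eqref{prob}, and by the Euler--Lagrange analysis $F_{2n}=m=F_2$.

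The delicate point is $A11$, together with the requirement in $A12$ that the exceptional set $\cZ$ be at most finite: one must control the set $\{t:\ \D[Z(t)]=0\}$, which for a genuinely random Gaussian drift reduces to $\{0\}$ but which could a priori be larger if $z$ degenerates to a deterministic function on some subinterval; on such a subinterval, however, $Z$ is deterministic and the approximation is exact, so $F_{2n}=F_2$ holds there trivially. One must also respect the order of the steps: the Gaussian moment identity has to be used to locate the root of \eqref{eulan} \emph{before} checking $A11$ and $A12$, since those are verified precisely along that root.
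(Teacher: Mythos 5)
Your proposal is correct and follows essentially the same route as the paper: expand $\E[(x-Z(t))^{2n-1}]$ around the mean $m(t)=\E[Z(t)]$ via the binomial theorem, use the vanishing of the odd central moments of the Gaussian $Z(t)$, and factor out $(x-m(t))$ to identify $F_{2n}=F_2$ as the unique root of the Euler--Lagrange equation. You are in fact somewhat more careful than the paper, which merely asserts that the remaining factor forces $F_{2n}=\E[Z(t)]$, whereas you show it is strictly positive via the explicit even moments $(2j-1)!!\,v(t)^j$, and you also spell out the verification of $A11$--$A14$ (in particular the set $\{t:\D[Z(t)]=0\}$) that the paper leaves implicit.
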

\begin{proof}
	Let us first observe that if $z(t)$ satisfies hypotheses $i$ and $ii$ of Proposition \ref{prop3.10} for $p=2n$, then it satisfies the same hypotheses for $p=2$. Moreover, being $z(t)$ a Riemann-integrable Gaussian process, also $Z(t)$ is a Gaussian process. Now let us observe that Equation \eqref{eulanpower} for $p=2n$ becomes
	\begin{equation}\label{eulanpowereven}
	\E[(Z(t)-F_{2n}(t))^{2n-1}]=0.
	\end{equation}
	Let us add and subtract $\E[Z(t)]$ in the left-hand side of Equation \eqref{eulanpowereven} to achieve
	\begin{align}\label{conti1}
	\begin{split}
	&\E[(Z(t)-F_{2n}(t))^{2n-1}]=\E[(Z(t)-\E[Z(t)]+\E[Z(t)]-F_{2n}(t))^{2n-1}]\\
	&\qquad=\sum_{i=0}^{2n-1}\binom{2n-1}{i}\E[(Z(t)-\E[Z(t)])^{2n-1-i}](\E[Z(t)]-F_{2n}(t))^{i}\\
	&\qquad=\sum_{i=1}^{2n-1}\binom{2n-1}{i}\E[(Z(t)-\E[Z(t)])^{2n-1-i}](\E[Z(t)]-F_{2n}(t))^{i}\\
	&\qquad \qquad+\E[(Z(t)-\E[Z(t)])^{2n-1}].
	\end{split}
	\end{align}
	However, being $Z(t)$ Gaussian, we have $\E[(Z(t)-\E[Z(t)])^{2n-1}]=0$ and then Equation \eqref{conti1} becomes
	\begin{align}\label{conti2}
	\begin{split}
	&\E[(Z(t)-F_{2n}(t))^{2n-1}]=\sum_{i=1}^{2n-1}\binom{2n-1}{i}\E[(Z(t)-\E[Z(t)])^{2n-1-i}](\E[Z(t)]-F_{2n}(t))^{i}\\
	&\qquad=\sum_{i=0}^{2n-2}\binom{2n-1}{i+1}\E[(Z(t)-\E[Z(t)])^{2n-2-i}](\E[Z(t)]-F_{2n}(t))^{i+1}\\
	&\qquad=(\E[Z(t)]-F_{2n}(t))\sum_{i=0}^{2n-2}\binom{2n-1}{i+1}\E[(Z(t)-\E[Z(t)])^{2n-2-i}](\E[Z(t)]-F_{2n}(t))^{i}.
	\end{split}
	\end{align}
	Thus, substituting the result of Equation \eqref{conti2} in Equation \eqref{eulanpowereven} we get
	\begin{equation*}
	(\E[Z(t)]-F_{2n}(t))\sum_{i=0}^{2n-2}\binom{2n-1}{i+1}\E[(Z(t)-\E[Z(t)])^{2n-2-i}](\E[Z(t)]-F_{2n}(t))^{i}=0
	\end{equation*}
	whose solution is given by $F_{2n}(t)=\E[Z(t)]=F_2(t)$, concluding the proof.
\end{proof}
Hence the processes $X_2$ and $X_4$ (and any $X_{2n}$ for $n \in \N$) coincide in the case of a Gaussian drift.\\
Finally, let us observe that even in this case $\cJ_2[X_2]$ can grow at most quadratically with respect to the time horizon $T$.
\subsection{An Ornstein-Uhlenbeck process as drift.}\label{Sec.4.6}
Let $z(t)=U(t)$ be the stochastic drift process with $U(t)$ the OU process solution of the following SDE
\begin{equation*}\label{Equ.4.1}
dU(t)=-\lambda U(t)dt+\sigma_U d\widetilde{W}(t),\quad U(0)=U_0
\end{equation*}
where $U_0$, $\sigma_U$, $\lambda \in \mathbb R$ with $\lambda \not = \theta$.\\
In this case we have, by Equation \eqref{Equ.2.3},
\begin{equation*}
Z(t)=e^{-\theta t}\int_0^t U(s) e^{\theta s}ds,
\end{equation*}
that is solution of
\begin{equation*}
dZ(t)=(-\theta Z(t)+U(s))ds,
\end{equation*}
and, by Prop \ref{prop2.1} and Equation \eqref{spliteq},
\begin{align*}
X(t)=e^{-\theta t}x_0+\sigma e^{-\theta t}\int_0^{t}e^{\theta s}dW_s+Z(t).
\end{align*}
Recalling that $f_2(t)=\mathbb{E}[z(t)]=e^{-\lambda t}U_0$ we have $X_2(t)$, by solving Equation \eqref{SDEapproxp}, as
\begin{equation*}
X_2(t)=e^{-\theta t}x_0+\sigma e^{-\theta t}\int_0^{t}e^{\theta s}dW_s+\frac{U_0}{\theta-\lambda}(e^{-\lambda t}-e^{-\theta t}).
\end{equation*}
Since we also have
\begin{equation*}
\mathbb{D}[z(t)]=\frac{ \sigma_U^2}{2\lambda}\left(1-e^{-2\lambda t}\right)
\end{equation*}
we obtain, by Equation \eqref{d2def},
\begin{equation*}
d_2(t)=\frac{\sigma_U^2}{4\lambda\theta}(1-e^{-2\theta t})+\frac{\sigma_U^2}{4\lambda(\theta-\lambda)}(e^{-2\lambda t}-e^{-2\theta t}).
\end{equation*}
Let us observe that in such case the mean cumulative error of approximation is asymptotically bounded by a constant, i.e.
\begin{equation*}
\frac{\cJ_2[X_2(t)]}{T}\le \frac{\sigma_U^2}{4\lambda \theta}+\frac{\sigma^2(\theta-\lambda)}{8\lambda^2\theta^2 T}+\frac{\sigma_U^2(2\theta-\lambda)}{8\lambda \theta^2(\theta-\lambda)}\frac{e^{-2\theta T}}{T}-\frac{\sigma_U^2}{8 \lambda^2(\theta-\lambda)}\frac{e^{-2\lambda T}}{T}=:D_2(T),
\end{equation*}
where $\lim_{T \to +\infty}D_2(T)=\frac{\sigma_U^2}{4 \lambda \theta}$, thus $\cJ_2[X_2]$ can grow at most linearly with respect to the time horizon.
%

\subsection{Numerical results}\label{SecNR}
\begin{table}[h]
	\begin{tabular}{|l|l|l|l|l|}
		\hline
		& $\mathcal{J}_2[X_2]$ & $\mathcal{J}_2[X_4]$ & $\mathcal{J}_4[X_2]$ & $\mathcal{J}_4[X_4]$ \\ \hline
		Single Shot                & $0.04809342$         & $0.07091437$         & $0.005860227$        & $0.003705015$        \\ \hline
		Poisson                    & $7.268493$           & $7.437058$           & $50.80439$           & $49.55527$           \\ \hline
		Compound Poisson           & $3.612515$           & $3.957901$           & $16.41228$           & $14.52411$           \\ \hline
		\textbf{Brownian Motion} & $3.619208$           & $3.619208$           & $12.40195$           & $12.40195$           \\ \hline
		\textbf{Ornstein-Uhlenbeck}         & $0.1985489$          & $0.1985489$          & $0.02649128$         & $0.02649128$         \\ \hline
	\end{tabular}
	\caption{Numerical evaluation of $\cJ_i[X_j]$ for $i=2,4$ and $j=2,4$ for the considered examples.}
	\label{table}
\end{table}
We considered six examples of possible choices for the drift. In the first four cases the resulting process $X$ is not  Gaussian, while in the last two it is. For each example (except the Shot Noise case), we evaluate numerically the quantities $\cJ_i[X_j]$, for $i=2,4$ and $j=2,4$. $\cJ_2$ is the integral mean square error of approximation of $X$ with the process $X_j$, while $\cJ_4$ is the integral mean fourth-power error of approximation of $X$ with the process $X_j$. In general we expect, from Theorem \ref{thmmain}, $\cJ_i[X_j] \ge \cJ_i[X_i]$ for $i=2,4$, $j=2,4$ and $i \not = j$. As shown in Table \ref{table}, this inequality holds strictly in the first three cases, while it is an equality on the last two (in bold in Table \ref{table}). This latter fact is justified by Proposition \ref{propGauss}, since $X_2=X_4$ for Gaussian drifts. To obtain each quantity, we used the formula
\begin{equation*}
\cJ_i[X_j]=\int_0^{T}\E[|Z(t)-F_j(t)|^i]dt
\end{equation*}
to avoid the simulation of the whole trajectories of the processes $X(t)$, $X_2(t)$ and $X_4(t)$, which could lead to numerical errors. While $F_2(t)$ is known explicitly, $F_4(t)$ has been obtained by solving numerically Equation \eqref{eulanpower4}. Since \eqref{eulanpower4} is a simple polynomial equation, we used for each $t$ the bisection method to evaluate $F_4(t)$, with a precision of $10^{-15}$. We set $\theta=1.5$, $\lambda=2$, $X_0=0$, $U_0=1$, $\sigma=1$, $\sigma_U=1$. In the compound Poisson case, we used $J \sim Exp(\lambda_J)$ with $\lambda_J=2$. The simulation time step is $\Delta t=10^{-3}$ and the time horizon $T=5$. For each example $10^4$ sample paths have been produced.\\
The case of the Shot Noise will be discussed in the next section.
\section{A model of a neuron embedded in a neuronal network}\label{Sec.5} 
In this section we will focus on an application of Theorem \ref{thmmain} to neuronal modeling described by the linear Equation \eqref{Equ.2.1}.\\
In particular, we are interested in the dynamics of a neuron embedded in a network of $M \in \N$ neurons. We assume that the neuron under study receives impulses from the other neurons, whenever they fire for the first time. We say that a neuron fires when its membrane potential exceeds a critical value: after the crossing, the value of the membrane potential is reset to its resting state and the dynamics starts anew. This process generates an electrical impulse that is transferred to the neurons that are connected to it. For this reason the membrane potential can be modelled as a leaky RC circuit with a drift characterizing the input stimuli. The membrane potential of each neuron of the network is then modelled by the following stochastic differential equation:
\begin{equation}
dV^{(i)}=\left(-\theta_i V^{(i)}+\mu_i\right)dt+\sigma_i dW_i \quad V^{(i)}(0)=v_{0,i},  \quad (i=1,\ldots, M),\label{Equ.5.1}
\end{equation} 
where $1/\theta_i>0$ is the characteristic time of the membrane, $\mu_i$ is a constant injected stimulus and $\sigma_i$ determines the amplitude of the baseline noise.\\

\begin{figure}
	\begin{center}
	\resizebox{0.7\textwidth}{!}{\usetikzlibrary{arrows.meta}
\begin{tikzpicture}
\draw[fill=yellow] (5,1) circle (1);
\draw[fill=green] (1,4) circle (0.5);
\draw[fill=green] (3,4) circle (0.5);
\draw[fill=green] (5,4) circle (0.5);
\draw[fill=green] (9,4) circle (0.5);
\draw[fill=black] (5,2) circle (0.05);
\draw[fill=black] (1,3.5) circle (0.05);
\draw[fill=black] (3,3.5) circle (0.05);
\draw[fill=black] (5,3.5) circle (0.05);
\draw[fill=black] (9,3.5) circle (0.05);
\draw[-{Latex[length=0.2cm]}] (1,3.5) -- (5,2);
\draw[-{Latex[length=0.2cm]}] (3,3.5) -- (5,2);
\draw[-{Latex[length=0.2cm]}] (5,3.5) -- (5,2);
\draw[-{Latex[length=0.2cm]}] (9,3.5) -- (5,2);
\draw (1,4) node {\Large $V_1$};
\draw (3,4) node {\Large $V_2$};
\draw (5,4) node {\Large $V_3$};
\draw (7,4) node {\Large $\cdots$};
\draw (9,4) node {\Large $V_M$};
\draw (5,1) node {\Large $V$};
\end{tikzpicture}}
\end{center}
	\caption{Schematization of the neuronal model in Section \ref{Sec.5}}
	\label{fig3}
\end{figure}
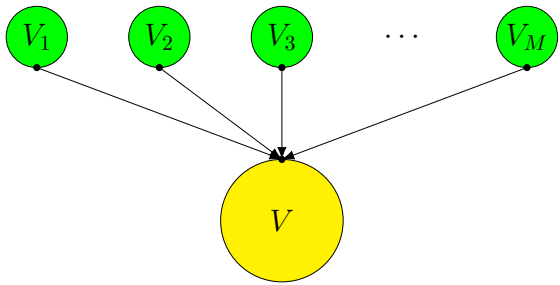
Concerning the embedded neuron, we assume that the stimuli it receives can be described by a function that is exponentially decreasing in time, with a characteristic time $1/\lambda>0$ (suppose for simplicity that $\lambda \not = \theta$). The initial amplitude of each stimulus is stochastic, represented by a family of i.i.d. random variables $\{\beta_i\}_{i \le M}$. The stochastic differential equation that describes the dynamics of this neuron is of shot noise type:
\begin{equation}
dV=\left(-\theta V+\sum_{i=1}^{M}\beta_{i} e^{-\lambda(t-\cT_{i})}\chi_{[0,+\infty)}(t-\cT_{i}) \right)dt+\sigma dW, \quad V(0)=v_0, \label{Equ.5.2}
\end{equation}
where $\cT_{i}$ is the first firing time of the $i$-th neuron of the network. In Figure \ref{fig3} we have a schematization of this model. Equations \eqref{Equ.5.1} and \eqref{Equ.5.2} are the classical It\^o stochastic differential equation for the stochastic diffusion Leaky Integrate and Fire (LIF) model (see, for instance, \cite{abbott,tuckwellvol2,sacerdote}). In particular, Equation \eqref{Equ.5.2} admits a stochastic drift of the form
\begin{equation}\label{Equ.5.3}
z(t):=\sum_{j=1}^{M}\beta_{i} e^{-\lambda(t-\cT_{i})}\chi_{[0,+\infty)}(t-\cT_{i}),
\end{equation}
where we suppose that $\{\beta_i\}_{i \le M}$ and $\{\cT_i\}_{i \le M}$ are i.i.d. and independent of each other. This is the driving term of the stimulus that neuron under study $V$ receives from the first-layer neurons $V_i$ (see Figure \ref{fig3}). All the other smaller inputs and changes in the environment are summarized by the Brownian noise. With the independence assumption we are supposing that the $M$ neurons described by the processes $\{V_i\}_{i \le M}$ are not communicating. This is a reasonable assumption: such physiological behaviour is common in the synaptic organization of the sensory neurons. An example of such behaviour is given by the sensory neurons of the olfactory bulb: such neurons are homogeneous and, neglecting eventual ephaptic coupling (which in general is insufficient to stimulate an action potential), independent from each other until their axons form a spherical structure named glomerulus, which carries all of such stimulus and is connected to the mitral cell \cite{ascione,Mori, Shepherd}. Another example of this behaviour is given by the photoreceptors of the retina, which are independent from each other and only linked to the retinal horizontal cell \cite{Shepherd,keener}.\\
$z(t)$ in Equation \eqref{Equ.5.3} is an example of shot noise with response function given by
\begin{equation}\label{Responseexp}
R(t)=e^{-\lambda t}\chi_{[0,+\infty)}(t).
\end{equation}
Let us then observe that, by Equation \eqref{ZSN}, with the choice of $R$ given by \eqref{Responseexp}, we get
\begin{equation*}
Z(t)=\frac{1}{\theta-\lambda}\sum_{i=1}^{M}\beta_i(e^{-\lambda(t-\cT_i)}-e^{-\theta(t-\cT_i)})\chi_{[0,+\infty)}(t-\cT_i)
\end{equation*}
and then, by solving Equation \eqref{Equ.5.2}, we obtain
\begin{align*}
V(t)&=e^{-\theta t}v_0+\sigma e^{-\theta t}\int_0^t e^{\theta s}dW_s\\
&\qquad+\frac{1}{\theta-\lambda}\sum_{i=1}^{M}\beta_i(e^{-\lambda(t-\cT_i)}-e^{-\theta(t-\cT_i)})\chi_{[0,+\infty)}(t-\cT_i).
\end{align*}
Now we consider some choices for the distribution of the first firing times $\cT_i$, as proposed in the literature, and we show the corresponding approximating processes $V_2$.
\subsection{The Exponential Case.} \label{Sec.5.1}
Let us suppose that $\cT$ is an exponential random variable (see, e.g., \cite{MCAP2011}) with parameter $\nu$. By definition of $\varphi(t)$ in Equation \eqref{varphiformula}, of $\Psi(t)$ in Equation \eqref{PsiSN}, and the choice of the response function $R(t)$ as in Equation \eqref{Responseexp}, we get
\begin{equation}\label{phiPsiexp}
\varphi(t)=\frac{\nu}{\nu-\lambda}(e^{-\lambda t}-e^{-\nu t}), \qquad \Psi(t)=\frac{\nu}{\nu-2\lambda}(e^{-2\lambda t}-e^{-\nu t}) 
\end{equation}
and then, by using Equation \eqref{X2SN}, we get
\begin{equation*}
V_2(t)=e^{-\theta t}v_0+\sigma e^{-\theta t}\int_0^t e^{\theta s}dW_s+\frac{M\E[\beta]\nu}{\nu-\lambda}\left(\frac{e^{-\lambda t}-e^{-\theta t}}{\theta-\lambda}-\frac{e^{-\nu t}-e^{-\theta t}}{\theta-\nu}\right).
\end{equation*}
Moreover, by using Equation \eqref{eqd2shot}, we get
\begin{align*}
d_2(t)&=M\nu\left(\frac{\E[\beta^2]}{\nu-2\lambda}\left(\frac{e^{-2\lambda t}-e^{-2\theta t}}{2(\theta-\lambda)}-\frac{e^{-\nu t}-e^{-2\theta t}}{2(\theta-\lambda)}\right)\right.\\
&\left.-\frac{\E[\beta]^2\nu}{(\nu-\lambda)^2}\left(\frac{e^{-2\lambda t}-e^{-2\theta t}}{2(\theta-\lambda)}-2\frac{e^{-(\lambda+\nu) t}-e^{-2\theta t}}{2\theta-\lambda-\nu}+\frac{e^{-2\nu t}-e^{-2\theta t}}{2(\theta-\nu)}\right)\right),
\end{align*}
with $\lim_{t \to +\infty}d_2(t)=0$.
Moreover, $d_2 \in L^1(0,+\infty)$, thus the approximation error $\cJ_2[V_2]$ is bounded by $\Norm{d_2}{L^1(0,+\infty)}$.
\subsection{The Gamma Case.} \label{Sec.5.2}
Gamma distribution is also a popular choice for the interspike interval distribution (see, e.g., \cite{Lansky2016}). Let us consider $\cT$ as a Gamma random variable with rate $\nu>0$ and shape parameter $\alpha>0$, i.e $\cT \sim \Gamma(\nu,\alpha)$. By definition of $\varphi(t)$ in Equation \eqref{varphiformula}, of $\Psi(t)$ in Equation \eqref{PsiSN}, and the choice of the response function $R(t)$ as in Equation \eqref{Responseexp}, we get
	\begin{equation}\label{phiPsiGamma}
	\varphi(t)=\left(\frac{\nu}{\nu-\lambda}\right)^{\alpha}\frac{e^{-\lambda t}\gamma(\alpha,(\nu-\lambda)t)}{\Gamma(\alpha)}, \quad \Psi(t)=\left(\frac{\nu}{\nu-2\lambda}\right)^{\alpha}\frac{e^{-2\lambda t}\gamma(\alpha,(\nu-2\lambda)t)}{\Gamma(\alpha)},
	\end{equation}
	where $\gamma(\alpha,t)$ is the lower incomplete Gamma function
	\begin{equation*}
	\gamma(\alpha,t)=\int_{0}^{t}s^{\alpha-1}e^{-s}ds.
	\end{equation*}
	By using Equation \eqref{X2SN}, we get
	\begin{align*}
	V_2(t)&=e^{-\theta t}v_0+\sigma e^{-\theta t}\int_0^t e^{\theta s}dW_s\\&+\frac{M\E[\beta]\nu^\alpha}{\lambda-\theta}\left(\frac{e^{-\theta t}}{(\nu-\theta)^\alpha}\gamma(\alpha,(\nu-\theta)t)-\frac{e^{-\lambda t}}{(\nu-\lambda)^\alpha}\gamma(\alpha,(\nu-\lambda)t)\right).
	\end{align*}
	$d_2(t)$ can be obtained from Equation \eqref{eqd2shot} by using $\varphi(t)$ and $\Psi(t)$ given in Equation \eqref{phiPsiGamma}. The expression is omitted here due to its length, but one can show that $\lim_{t \to +\infty}d_2(t)=0$ and $d_2 \in L^1(0,+\infty)$, thus $\cJ_2[V_2]$ is bounded by $\Norm{d_2}{L^1(0,+\infty)}$. 
	In Table \ref{table2} we show some numerical evaluations of $\cJ_i[V_j]$ for $i=2,4$ and $j=2,4$. As expected from Theorem \ref{thmmain}, we have $\cJ_i[V_j] > \cJ_i[V_i]$ for $i \not = j$.

\begin{table}[]
\begin{center}
	\begin{tabular}{|l|l|l|l|l|}
		\hline
		& $\mathcal{J}_2[V_2]$ & $\mathcal{J}_2[V_4]$ & $\mathcal{J}_4[V_2]$ & $\mathcal{J}_4[V_4]$ \\ \hline
		Exponential Case & $26.8471$            & $27.67105$           & $61.20081$           & $58.75134$           \\ \hline
		Gamma Case       & $26.4166$            & $27.65103$           & $52.46327$           & $49.15484$           \\ \hline
		No Assigned Distribution       & $4.387099$            & $4.396086$           & $4.311031$           & $4.315560$           \\ \hline
	\end{tabular}
\end{center}
	\caption{Numerical evaluation of $\cJ_i[V_j]$ for $i=2,4$ and $j=2,4$ for the considered examples of Subsections \ref{Sec.5.1} and \ref{Sec.5.2}. The parameters are chosen as: $\theta=1/10 \text{ ms}^{-1}$, $\lambda=1 \text{ ms}^{-1}$, $\nu=1/15 \text{ ms}^{-1}$, $\sigma=1 \text{mV ms}^{-1/2}$, $\alpha=2$, $v_0=0 \text{mV}$, $M=10$. $\beta_i$ are chosen to be uniformly distributed in $(0.5 \text{mV},1.5 \text{mV})$. Last example is obtained without assuming any assigned distribution on $\cT_i$, but simulating Equation \eqref{Equ.5.1} for $i=1,\dots,10$ with $\mu_i=6 \text{mV ms}^{-1}$, $\sigma_i=1 \text{mV ms}^{-1/2}$, $\theta_i=1/10 \text{ ms}^{-1}$, $v_{0,i}=0 \text{mV}$ and $\cT_i$ is the first passage time of $V_i$ through the threshold $V_{th}=20 \text{mV}$. The time horizon is fixed at $T=10 \text{ms}$ while the simulation time step is $\Delta t=10^{-2} \text{ms}$. For each evaluation $N=10^{4}$ sample paths have been produced and $F_4(t)$ is obtained for each $t$ by solving equation \eqref{eulanpower4} via bisection method with a precision of $10^{-15}$.}
	\label{table2}
\end{table}

\section{Concluding remarks}\label{Sec.6}
In this work we studied the problem of approximating solutions of linear SDEs with stochastic drift by using Ornstein-Uhlenbeck type processes, as introduced in Section \ref{Sec.2}. In particular, in Section \ref{Sec.3}, we showed sufficient and necessary conditions for existence and uniqueness of an optimal approximation (with respect to a suitable, but general, cost functional). Moreover, in Subsection \ref{Sec.3.7}, we show that a wide class of cost functionals (i.e. the power cost functionals with $p \ge 2$) satisfies Theorem \ref{thmmain}.\\
In Section \ref{Sec.4}, these results have been applied to some examples that are of interest in the classical literature. Some specific features of the approximations are highlighted in such examples. For instance, for the simplest example in Subsection \ref{Sec.4.1}, we plot the simulated sample paths for $X$, $X_2$ and $X_4$ in Figure \ref{fig:f2f4}, on the left. On the right of Figure \ref{fig:f2f4} we also plotted the pointwise mean square error $\E[|X(t)-X_2(t)|^2]$ together with its bound $d_2(t)$.
The examples in Subsections \ref{Sec.4.2}, \ref{Sec.4.3} and \ref{Sec.4.5} exhibit the same (eventually up to some constant) approximating process with respect to the quadratic cost, highlighting the exclusive dependence on the mean of the drift in such case. However, the performance of the approximation is strictly related to the variance of the processes describing the drift. In Subsection \ref{Sec.4.6} we also provided an upper bound for a temporal-mean of the mean-square error, showing in this case that such mean is bounded by a constant. It is actually easy to show that this behavior appears every time the function $d_2(t)$ is in $L^\infty(\R)$, equivalently if the drift process concentrates around its asymptotic mean. Moreover, in Subsection \ref{Sec.4.5} we proved that for Gaussian drift terms the approximations $X_2$ and $X_{2n}$ for any $n \in \N$ coincide. In Subsection \ref{SecNR} we compare the behaviour of $\cJ_2$ and $\cJ_4$ on $X_2$ and $X_4$ for the examples of Section \ref{Sec.4}, giving both a confirmation of Theorem \ref{thmmain} and some quantitative information on the approximation error.\\
Finally, the example in Subsection \ref{Sec.4.4} is of interest in the frame of neuronal modeling. Indeed, we provide a model for a single neuron embedded in a neuronal network in Section \ref{Sec.5}. In such case, we specialize the response function of the shot noise process and we study the approximation of the membrane potential process. Indeed, as we did for Section \ref{Sec.4}, we evaluated the approximation errors $\cJ_2$ and $\cJ_4$ on the processes $V_2$ and $V_4$ under some suitable assumptions on the spiking times of the first layer of neurons $V^{(i)}$ and finally without any assumptions on them, by simulating the whole network. \\
The novelty of our findings is that in our case no hypotheses of ergodicity of the process (\cite{liu},\cite{jacod},\cite{limnios}) is asked, nor we use slow-fast dynamic techniques (\cite{berglund}, \cite{roc}), nor we increase asymptotically the number of neurons as in the mean field theory approach (\cite{Delarue},\cite{Touboul}, \cite{faugeras2019meanfield}, \cite{faugeras2019}, \cite{zanco}).\\
If on one hand the lack of assumptions on the drift process makes the result very general, on the other hand our approach does not easily lead to explicit solutions, since Equation \eqref{eulan} could be impossible to solve in closed form and numerical evaluations are needed.\\
The study of the approximation problem with a stochastic time horizon, depending eventually on the process itself, as, for instance, first passage times through some fixed thresholds, will be the subject of future studies.
\section*{Acknowledgements}
\vspace*{-0.3cm}
We thank the anonymous reviewers whose comments have greatly improved this manuscript. This research is partially supported by MIUR - PRIN 2017, project Stochastic Models for Complex Systems, no. 2017JFFHSH, by Gruppo Nazionale per il Calcolo Scientifico (GNCS-INdAM), by Gruppo Nazionale per l'Analisi Matematica, la Probabilit\`a e le loro Applicazioni (GNAMPA-INdAM) and by the Czech Science Foundation project 20-10251S.

\section*{References}
\bibliographystyle{elsarticle-num}

\bibliography{mybibfile}

\end{document}